\documentclass[10pt,reqno]{amsart}
\usepackage{hyperref,amssymb,mathrsfs,graphicx,subfigure}
\usepackage{amsfonts,amssymb,amsmath,enumerate} 
\usepackage{amsthm}
\usepackage{graphicx,colortbl,xcolor}
\usepackage{floatrow}
\usepackage{soul}
\usepackage{comment}

\newtheorem{theorem}{Theorem}[section]
\newtheorem{lemma}{Lemma}[section]

\newtheorem{proposition}{Proposition}[section]
\newtheorem{remark}{Remark}[section]
\newtheorem{definition}{Definition}[section]

\DeclareMathOperator*{\essinf}{ess\,inf}

\newcommand{\bbt} {\mathbb T}

\def\BE{\begin{equation}}
\def\EE{\end{equation}}

\newcommand{\R}{\mathbb{R}}

\newcommand{\Z}{\mathbb{Z}}
\newcommand{\eps}{\varepsilon}

\newcommand{\vv}{\mathtt v}

\newcommand{\caG}{\mathcal{G}}

\makeatletter
\newcommand{\doublewidetilde}[1]{{%
  \mathpalette\double@widetilde{#1}%
}}
\newcommand{\double@widetilde}[2]{%
  \sbox\z@{$\m@th#1\widetilde{#2}$}%
  \ht\z@=.9\ht\z@
  \widetilde{\box\z@}%
}
\makeatother

\def\charf {\mbox{{\text 1}\kern-.30em {\text l}}}

\begin{document}

\title[Decay of entropy solutions to Euler-alignment systems]{%
Decay of periodic entropy solutions to Euler-alignment systems with non-constant kernel
}

\author
{Debora Amadori}
\address{\newline 
Dipartimento di Ingegneria e Scienze dell'Informazione e Matematica (DISIM), University of L'Aquila -- L'Aquila, Italy}
\email{debora.amadori@univaq.it}

\author
{Cleopatra Christoforou}
\address{\newline Department of Mathematics and Statistics, University of Cyprus -- Nicosia, Cyprus}
\email{christoforou.cleopatra@ucy.ac.cy}

\author
{Gianmarco Cipollone}
\address{\newline 
Dipartimento di Ingegneria e Scienze dell'Informazione e Matematica (DISIM), University of L'Aquila -- L'Aquila, Italy}
\email{gianmarco.cipollone@graduate.univaq.it}

\date{\today}

\subjclass{Primary: 35L60; 35B40; Secondary: 35Q70; 35B10; 76N15} \keywords{Hydrodynamic models for self-organized dynamics, isentropic Euler-alignment system, $BV$ entropy weak solutions, relative entropy, time-asymptotic}


\thanks{\textbf{Acknowledgments.} The first and the third author acknowledge the support by the INdAM - GNAMPA Project with title “Analisi e controllo per alcuni problemi di evoluzione”, codice CUP E53C25002010001. They also acknowledge the support and the kind hospitality of the University of Cyprus, where part of this research was done. The second author also acknowledges the support by the Internal Grant with title ``Hyperbolic Conservation Laws: Theory and Applications" (HypConLaes) from University of Cyprus.}

\begin{abstract}
We consider a hydrodynamic model of flocking-type with pressure on the torus, with integrable interaction kernel 
and density bounded away from zero. We prove that, if an entropy weak solution exists, 
then its $L^2$ norm decays exponentially fast in time towards the mean values on the period. 
The proof relies on the study of a suitable energy functional that combines a strictly convex entropy for the system 
and a potential term, and this allows us to treat the nonlocal source term for a class of strictly positive convolution kernels in $L^1$.
\end{abstract}

\maketitle

\tableofcontents

\section{Introduction}
The mathematical modeling of self-organized systems is a topic that is receiving a lot of attention in the recent years, 
and it has brought new challenges in the area of partial differential equations that yield interesting questions in the mathematical community. 
Typical examples of self-organized systems can be met in biology: flocks of birds, swarms of bacteria or schools of fishes, but in general, 
self-organized system can be found in other sciences as long as emergent behavior is present in the system. Many of these mathematical models 
have been derived by the pioneering work of Cucker and Smale \cite{CuS1}. We refer to \cite{CFTV-2010, Shv2021} for general references on the subject.

In this paper we are interested in the Euler-type flocking system that takes the form
\begin{equation}\label{eq:system_Eulerian-rho-v}
    \begin{cases}
        \partial_t\rho +  \partial_x (\rho \vv) 
= 0, &\\
\partial_t (\rho\vv)
+  \partial_x \left(\rho\vv^2 + p(\rho) \right) =  \int_\bbt K(x-x')\rho(x)\rho(x')\left(\vv(x')  - \vv(x) \right)
\,dx',
    \end{cases}
\end{equation}
on the periodic domain $\bbt =\R/2\Z$, represented by the interval $[-1,1)$. 
Here, $(x,t)\in\bbt\times[0,+\infty)$, $\rho>0$ stands for the density, $\vv$ for the velocity 
and $p$ for the pressure. Our aim is to study the long term behavior of entropy weak solutions to \eqref{eq:system_Eulerian-rho-v} 
in the non-vacuum regime. We assume that the pressure term satisfies
\begin{equation}\label{hyp-on-p}
  p\in C^1([0,+\infty);[0,+\infty))\,,\qquad  p'>0
\end{equation}
and the convolution kernel $K$ satisfies
\begin{equation}\label{hyp-on-K}
K: \bbt\to \R\,,\qquad K \in L^1(\bbt)\,,\qquad K \mbox{ even}\,,\qquad \bar K:=\essinf_{x\in[-1,1)}K(x)>0\,.
\end{equation}
A class of interaction kernels $K$ that has been studied in \cite{ChHw2024} in the context of Euler-alignment system is 
\begin{equation*}
K(x) = |x|^{-\beta}\quad a.e.\ \mbox{on }\bbt,\quad \beta>0
\end{equation*}
and it is easy to check that for $0<\beta<1$ the kernels in this class satisfy conditions~\eqref{hyp-on-K}.
The constant case $K=\bar  K>0$ for all-to-all interaction models is also included in assumption~\eqref{hyp-on-K} and the existence of $BV$ entropy weak solutions is treated in~\cite{ACC2024}. However, for the non-constant case and under the assumption $p(\rho)= \kappa \rho$ with $\kappa>0$, the existence is under investigation. 

Concerning rigorous limit from kinetic formulation to hydrodynamic, we refer to \cite{ChHw2024,KMT15}
where the authors consider weak solutions to kinetic equations and their limit to suitably regular solutions to the hydrodynamic formulation. In \cite{ChHw2024} the authors rigorously derive the isentropic Euler-alignment system with singular communication weights from a BGK-alignment model in a multidimensional setting using a relative entropy approach. In particular the result on the one dimensional torus holds with $p(\rho)=\rho^3$ 
and with a singular kernel of the form $K(x)=|x|^{-\beta}$ with $\beta\in(0,\tfrac{3}{2})$, while in \cite{KMT15} the authors rigorously derive the Euler-flocking system with isothermal pressure, and smooth and symmetric kernel, again in a multidimensional setting and by means of relative entropy techniques. For general references on hydrodynamic models, see \cite{HT08,Tadmor2023}. 

Having set $m:=\rho\vv$ as the momentum, we rewrite the system for the conserved quantities as
\begin{equation}\label{eq:system_Eulerian-p}
    \begin{cases}
        \partial_t\rho + \partial_x m =0,\\
        \partial_t m + \partial_x \left( \frac{m^2}{\rho} + p(\rho) \right) = \int_\bbt K(x-x')(\rho(x)m(x')-\rho(x')m(x))\,dx'
    \end{cases}
\end{equation}
and consider the initial condition
\begin{equation}\label{eq:init-data}
    (\rho,m)(x,0) = (\rho_0(x),m_0(x))\,,\qquad x\in\bbt\,.
\end{equation}

In view of studying the long term behavior of entropy weak solutions to \eqref{eq:system_Eulerian-p}-\eqref{eq:init-data}, 
we assume that an entropy weak solution $(\rho, m)$ to \eqref{eq:system_Eulerian-p}-\eqref{eq:init-data} exists up to some time $T^*$, with $T^* \leq +\infty$ and that the density $\rho$ is uniformly bounded and away from zero, i.e. there are positive constants $0<\bar\rho_{inf}\le\bar\rho_{sup}$ such that
\begin{equation}\label{eq:bounds-rho}
    \bar\rho_{inf} \leq \rho(x,t) \leq \bar\rho_{sup}\qquad (x,t)\in \Omega:= \bbt\times [0,T^*)\,.
\end{equation}
We remark that, if the convolution kernel is constant, the result in \cite{ACC2024} captures 
the global existence of periodic entropy weak solutions, for periodic initial data having finite total variation and initial density bounded away from zero, in the case of linear pressure term 
while the long-time behavior is not treated therein. 
The 
existence result to the case of an integrable, positive convolution kernel is treated in a forthcoming publication by the authors. We also refer to \cite{AC_2022,AC2024} for other results on the Euler-alignment system with linear pressure. In \cite{AC_2022} the authors study the existence of solutions in the case of a constant convolution kernel, for compactly supported initial data with density uniformly positive away from zero. Moreover, they analyze the large time-behavior of solutions under a suitable condition on the initial data. However, in \cite{AC2024}, this condition is removed and the unconditional time asymptotic flocking is proved. 

In this paper, our approach to study the long-time behavior of weak solutions is inspired by the work of Dafermos to hyperbolic systems of balance laws with dissipative source. In particular we adopt the strategy in \cite[Section 3]{Dafermos_periodic2015} to deal with our system, where the presence of the nonlocal source term requires a careful treatment. 

Here is the main result of this paper.

\begin{theorem}\label{thm 1:L2-decay}
Assume $(\rho_0,m_0)\in L^\infty(\bbt)$ 
and that $\rho_0(x)\in [\bar\rho_{inf},\bar\rho_{sup}]$ for a.e. $x\in\bbt$, for some $0< \bar\rho_{inf}\le \bar\rho_{sup}$ and
suppose that ~\eqref{hyp-on-p}, \eqref{hyp-on-K} hold. Let $T^*>0$ or $T^*=+\infty$.

If $(\rho, m)$ is an entropy weak solution of the problem \eqref{eq:system_Eulerian-p}-\eqref{eq:init-data} on $\bbt\times[0,T^*)$ that satisfies~\eqref{eq:bounds-rho}, 
then there exist positive constants $\mu$ and $c$, independent of $T^*$, such that 
    \begin{equation}\label{S1:decay-bis} \| \rho(\cdot,t) -\bar\rho\|_{L^2(\bbt)}+ \| m(\cdot,t)-\bar m \|_{L^2(\bbt)}
    \leq \mu\left[\| \rho_0-\bar\rho\|_{L^2(\bbt)}+\| m_0-\bar m\|_{L^2(\bbt)}\right]e^{-ct}
    \end{equation}
for all $t\in [0,T^*)$, where $\bar\rho$ and $\bar m$ are the average values on $\bbt$ of $\rho_0$ and $m_0$, respectively.
\end{theorem}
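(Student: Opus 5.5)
We follow Dafermos' strategy for periodic balance laws with dissipative source: build an energy functional that is equivalent to the $L^2$ distance from $(\bar\rho,\bar m)$, show it is nonincreasing in time at an exponential rate, and conclude by Gronwall. First, integrating \eqref{eq:system_Eulerian-p} over $\bbt$ shows that $\int_\bbt\rho\,dx$ is conserved. Since $K$ is even, $\int\!\int K(x-x')(\rho(x)m(x')-\rho(x')m(x))\,dx'\,dx=0$, so $\int_\bbt m\,dx$ is also conserved. Hence $\bar\rho,\bar m$ remain the averages for all $t$.

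\textbf{The functional.} Let $\eta(\rho,m)=\frac{m^2}{2\rho}+P(\rho)$ with $P(\rho)=\rho\int_{\bar\rho}^\rho p(s)s^{-2}ds$; this is the strictly convex mechanical entropy, with flux $q=\frac{m}{\rho}(\eta+p)$. Set $U=(\rho,m)$, $\bar U=(\bar\rho,\bar m)$, and let $\eta(U|\bar U)=\eta(U)-\eta(\bar U)-D\eta(\bar U)(U-\bar U)$ be the relative entropy. By \eqref{eq:bounds-rho}, $\eta(U|\bar U)$ is comparable to $|U-\bar U|^2$ on the range of the solution. Strictly speaking, $m$ is bounded a priori only in $L^\infty$ as part of the assumed weak solution; the constants are allowed to depend on the bounds of the solution. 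Since $U-\bar U$ has zero mean, $\int\eta(U|\bar U)=\int(\eta(U)-\eta(\bar U))$. The entropy inequality $\partial_t\eta+\partial_xq\le D_m\eta\cdot S$, integrated over the torus, gives
\[
\frac{d}{dt}\int\eta\le\int \vv(x)\int K(x-x')\rho(x)\rho(x')(\vv(x')-\vv(x))\,dx'dx=-\frac12\iint K\rho\rho'(\vv-\vv')^2,
\]
using symmetrisation and $K$ even. This is bounded above by $-\frac{\bar K\bar\rho_{inf}^2}{2}\iint(\vv-\vv')^2=-c\int|\vv-\langle \vv\rangle|^2$. Thus the velocity fluctuation is dissipated, but the density fluctuation is not directly controlled.

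\textbf{Potential term (main obstacle).} To capture $\rho-\bar\rho$, add a small corrector. Let $\phi$ be the periodic zero-mean solution of $\partial_x\phi=\rho-\bar\rho$, and set
\[
\mathcal E(t)=\int\eta(U|\bar U)\,dx-\delta\int m\,\phi\,dx.
\]
Since $|\phi|\le C\|\rho-\bar\rho\|_{L^1}$, $\mathcal E$ is equivalent to $\|U-\bar U\|_{L^2}^2$ for small $\delta$. Weakly, $\partial_t\phi=-(m-\bar m)$ (the constant is fixed by the zero mean). The time derivative of $-\int m\phi$ then contributes three pieces:
\begin{itemize}
\item $\int (m-\bar m)m=\|m-\bar m\|^2$;
\item from the flux, $-\int(\frac{m^2}\rho+p(\rho))(\rho-\bar\rho)$, whose pressure part satisfies $-\int(p(\rho)-p(\bar\rho))(\rho-\bar\rho)\le -c\|\rho-\bar\rho\|^2$ by $p'>0$, while the remaining terms are bounded by $C\|\vv-\bar\vv\|\,\|\rho-\bar\rho\|$ after writing $m^2/\rho$ around its mean;
\item the source term, bounded by $\|K\|_{L^1}\bar\rho_{sup}\|\vv-\cdot\|$-type terms, namely $C\iint K\rho\rho'|\vv-\vv'|\,|\phi|$.
\end{itemize}
To make the flux estimate rigorous in the weak setting, we test the momentum equation against $\phi$ (Lipschitz in $x$, with time derivative in $L^\infty$) and use a time mollification. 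The delicate point is to express $m-\bar m$ and $m^2/\rho-\overline{\cdot}$ through $\vv-\langle\vv\rangle$ and $\rho-\bar\rho$, so that Young's inequality lets $\delta$ absorb everything into the dissipation term $c\int|\vv-\langle \vv\rangle|^2$ and the $-\delta c\|\rho-\bar\rho\|^2$ term. This closes thanks to the bounds on $\rho$ and the positivity of $\bar K$. For the $\|m-\bar m\|^2$ term with a positive sign, we note that $m-\bar m=\rho(\vv-\langle\vv\rangle)+\langle \vv\rangle(\rho-\bar\rho)-(\overline{\rho\vv}-\bar\rho\langle\vv\rangle)$, which is controlled by both dissipated quantities, so it can be absorbed for $\delta$ small.

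\textbf{Conclusion.} We obtain $\frac{d}{dt}\mathcal E\le -c'(\|\vv-\langle\vv\rangle\|^2+\|\rho-\bar\rho\|^2)\le -c''\mathcal E$ in the sense of distributions in time. Gronwall's inequality then gives $\mathcal E(t)\le\mathcal E(0)e^{-2ct}$, and the equivalence of $\mathcal E$ with the squared $L^2$ norm yields \eqref{S1:decay-bis}, with constants independent of $T^*$.
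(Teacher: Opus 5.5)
There is a genuine gap in the step where you absorb the positive term $\delta\|m-\bar m\|^2$. You never reduce to zero mean momentum, so in your decomposition $m-\bar m=\rho(\vv-\langle\vv\rangle)+\langle\vv\rangle(\rho-\bar\rho)-(\bar m-\bar\rho\langle\vv\rangle)$ the middle term has an $O(1)$ coefficient. Indeed, $\langle\vv\rangle$ differs from $\bar\vv:=\bar m/\bar\rho$ only by a quantity controlled by the velocity fluctuation, and $\bar\vv$ is fixed by the data and arbitrary. Hence $\delta\|m-\bar m\|^2$ contains a piece of size $\delta\,\bar\vv^2\|\rho-\bar\rho\|^2$.

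The only coercive term you have in $\rho-\bar\rho$ is $-\delta\int_\bbt(p(\rho)-p(\bar\rho))(\rho-\bar\rho)\,dx$, and it carries the same factor $\delta$. Shrinking $\delta$ therefore does nothing. You would need $\bar\vv^2\lesssim\inf p'$, a subsonic smallness condition on the mean flow that is not among the hypotheses.

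Your bound of the non-pressure flux terms by $C\|\vv-\bar\vv\|\,\|\rho-\bar\rho\|$ does not help either. It can only hold after discarding the term $-\bar\vv^2\|\rho-\bar\rho\|^2$ contained in $-\int_\bbt\frac{m^2}{\rho}(\rho-\bar\rho)\,dx$, and that discarded term is exactly what you need. Test it at an instant where $\vv\equiv\bar\vv$. The entropy dissipation vanishes, and your separate bounds leave $\delta\bigl(\bar\vv^2\|\rho-\bar\rho\|^2-\int_\bbt(p(\rho)-p(\bar\rho))(\rho-\bar\rho)\,dx\bigr)$. This is positive once $\bar\vv^2$ exceeds $\max p'$ on $[\bar\rho_{inf},\bar\rho_{sup}]$, so $\frac{d}{dt}\mathcal E\le -c''\mathcal E$ is not established.

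The missing idea is Galilean invariance: the two $\bar\vv^2$ contributions cancel exactly. With $\bar\vv=\bar m/\bar\rho$,
\[
\|m-\bar m\|_{L^2(\bbt)}^2-\int_\bbt\frac{m^2}{\rho}(\rho-\bar\rho)\,dx=\bar\rho\int_\bbt\rho\,(\vv-\bar\vv)^2\,dx\le\bar\rho\,\bar\rho_{sup}\,\|\vv-\langle\vv\rangle\|_{L^2(\bbt)}^2 ,
\]
since $\bar\vv$ minimizes $a\mapsto\int_\bbt\rho(\vv-a)^2\,dx$. This combined term, weighted by $\delta$, is absorbed by your entropy dissipation.

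Equivalently, first change frame by $x\mapsto x-\bar\vv t$, $\vv\mapsto\vv-\bar\vv$, as the paper does at the start of its set-up. The alignment term is invariant, since it only involves $K(x-x')$ and velocity differences. Then $\bar m=0$ and $\bar\rho\langle\vv\rangle=-\frac12\int_\bbt(\rho-\bar\rho)(\vv-\langle\vv\rangle)\,dx$, so your absorption goes through. This cancellation is also why, in the paper's identity for $\partial_t(\Phi\tilde m)$, the terms $-\tilde m^2+\tilde\rho\,\tilde m^2/(\tilde\rho+\bar\rho)$ collapse to $-\tilde m^2\bar\rho/(\tilde\rho+\bar\rho)$.

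With this repair, the rest of your argument is a legitimate alternative to the paper's. You use $K\ge\bar K$ directly in the entropy dissipation, and $K\in L^1$ together with $|\phi|\le\|\rho-\bar\rho\|_{L^1}$ for the corrector's source term, all with a small weight $\delta$. The paper instead splits $K=\bar K+\tilde K$ and adds $\Phi^2$ to the energy to cancel the $-2\bar K\bar\rho\,\Phi\tilde m$ cross term exactly. It then bounds the $\tilde K$-part through $J_1$ and $J_2$. Your version avoids both the $\Phi^2$ term and the kernel splitting, at the price of less explicit constants.
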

We remark that, if $T^*=+\infty$ in Theorem \ref{thm 1:L2-decay}, i.e. the solution exists globally in time, 
then \eqref{S1:decay-bis} implies that the model exhibits a velocity alignment property. Indeed, we immediately deduce that
    \begin{equation}\label{S1:alignment}
    \| \vv(\cdot,t)-\bar \vv\|_{L^2(\bbt)}\rightarrow 0,\qquad\text{as } t\to+\infty,     \qquad 
    \bar \vv = \frac{\bar m}{\bar\rho}\,.
    \end{equation} 
   
Several contributions on the subject are devoted to suitably regular solutions; for instance in \cite{Choi2019} the author studies the well-posedness and the long-time behavior of strong solutions for the compressible isothermal Euler system with nonlocal dissipation on a periodic domain in a multidimensional setting. This result is obtained under suitable smallness and regularity assumptions on the initial data.

Concerning periodic solutions of systems of balance laws, in \cite[Section 12.10]{Dafermosbook} the large-time behavior of solutions to genuinely nonlinear, $2\times2$ systems of conservation laws is studied, and the total variation is shown to decay as $1/t$. In \cite{Dafermos_periodic2015,{D-BAcSin-2016}} strictly hyperbolic systems — including the compressible Euler system with partially dissipative source terms — are analyzed under general pressure laws and small initial data, and exponential decay of the total variation is established. 

In addition, systems of conservation laws with a non-local source term have been studied in \cite{{CG2007},{CG2008}}, 
where the well-posedness of the Cauchy problem for small BV data  has been addressed by means of a semigroup approach.

Furthermore, we observe that most of the research on hydrodynamic models for flocking is devoted to the pressureless Euler-alignment system, see \cite{Shv2021} and references therein. In particular, in \cite{DS-2021} the pressureless analogue of \eqref{eq:system_Eulerian-p} is considered and analysis is conducted by means of an alignment functional which is similar, in structure, to the functional ${\mathcal H}(t)$ defined at \eqref{def:H-function}.

The strategy of the paper is as follows. We first rewrite system \eqref{eq:system_Eulerian-rho-v} in terms of the perturbation variables $(\tilde\rho,\tilde m)$ 
that represent the deviation from the average values. We then construct a strictly convex entropy $\tilde\eta(\tilde\rho,\tilde m)$ inspired by the general structure 
of relative entropy, and we introduce the potential functional
\begin{align*}
    \Phi(x,t)  & := \int_0^x \tilde\rho(y,t)\,dy - \frac{1}{2}\int_{-1}^1\int_0^z \tilde\rho(y,t)\,dy\,dz\;,
    \end{align*}
which is 2-periodic and with zero average on the period and it can be controlled by the $L^2$ norm of $\tilde\rho$. Using the entropy $\tilde \eta$ and the potential functional $\Phi$, we define the energy functional $\mathcal{E}(t)$:
\begin{align*}
    \mathcal{E}(t) & := \int_\bbt \left[ \sigma\tilde\eta + \Phi^2 - \frac{1}{\bar K\bar\rho}\Phi \tilde m \right]\,dx 
    \end{align*}
with $\sigma>0$ sufficiently large. 

The strict convexity of $\tilde\eta$, together with the bound on $\Phi$, allows us to show that $\mathcal{E}(t)$ is equivalent to the squared $L^2$ norm of the perturbations $(\tilde\rho,\tilde m)$. We further prove that $\mathcal{E}(t)$ decays exponentially in time, from which we deduce the convergence in $L^2$ of the solution $(\rho,m)$ toward the equilibrium state determined by the spatial averages of the initial data $(\rho_0,m_0)$.

\smallskip
The structure of the paper is the following.

- In Section~\ref{S2}, we change the variables ($\rho,m)$ to  $(\tilde \rho,\tilde m)$ that are the perturbations of the density and the momentum with respect to their averages $\bar\rho$ and $\bar m$, respectively and rewrite the system for new variables.
   After having given the definition of entropy weak solution, we introduce the relative entropy $\tilde\eta$ in terms of the new variables, the energy functional $\mathcal{E}(t)$ and other functionals that are later employed in the proofs. Last, we restate Theorem~\ref{thm 1:L2-decay} in terms of the new variables, see Theorem~\ref{th:L2-decay}, and state the decay of the $L^2$ norm of the solutions for the shifted system.
    
- In Section \ref{S3}, we analyze the energy functional $\mathcal{E}(t)$ in more detail.
    Thanks to the analysis in Section~\ref{S2} and more specifically, the estimates on the functionals therein and the relative entropy, we prove the decay in time of the energy functional 
    $\mathcal{E}(t)$ and then complete the proof of Theorem~\ref{th:L2-decay}
    that implies immediately Theorem \ref{thm 1:L2-decay}.

\section{Set Up of the problem}\label{S2} 
\setcounter{equation}{0}

We consider 2-periodic initial data $U_0(x)=(\rho_0(x),\,m_0(x))$ and define 
\begin{equation}\label{S2: initial average}
\bar \rho :=\frac 12 \int_{\bbt} \rho_0(x)\,dx\,,\qquad {\bar m}:= \frac{1}{2}\int_{\bbt} m_0(x)\,dx
\end{equation}
where $\bar \rho\in[\bar\rho_{inf},\bar\rho_{sup}]$. Without loss of generality $\bar m$ can be assumed to be
zero, since with a change of coordinates we can reduce the problem to zero average momentum. 
Indeed one can apply the change of the variables $(x,t)$ and $\vv$ given by 
$x\mapsto x-\bar\vv t$, $\vv\mapsto\vv-\bar\vv$
with $\bar\vv$ the average velocity as given in \eqref{S1:alignment}.

For convenience, we use the following notation for the source term of system~\eqref{eq:system_Eulerian-p}: 
\begin{equation}\label{def:G-periodic}
    \mathcal{G}[\rho,m](x) := \int_{\bbt} K(x-x')\left(\rho(x)m(x')  - \rho(x')m(x) \right)\,dx',
\end{equation}
from here and on to simplify the forthcoming analysis. 

We also note that an entropy-entropy flux pair $(\eta,q)$ for system~\eqref{eq:system_Eulerian-p} 
is a smooth pair of functions $\eta=\eta(\rho,m)$ and $q=q(\rho,m)$, with $\eta$ convex, that satisfies the equations
$$ \left(-\frac{m^2}{\rho^2}+p'\right) \eta_m=q_\rho,\qquad \eta_\rho+\frac{2m}{\rho} \eta_m =q_m\,.
$$
Now we provide the definition of an \emph{entropy weak solution} $(\rho, m)(x,t)$ for $x\in\bbt$ and $t\in[0,T^*)$. 

\begin{definition}
Assume that $(\rho_0,m_0)\in L^\infty(\bbt)$ with $\rho_0(x)>0 $ for a.e. $x\in\bbt$.
Assume that $K\in L^1(\bbt)$. Let $T^*>0$ or $T^*=+\infty$.  
      
        We say that $(\rho,\,m)$ is an entropy weak solution of \eqref{eq:system_Eulerian-p}-
        \eqref{eq:init-data} on $\Omega = \bbt\times [0,T^*)$ if:
    \begin{itemize}
\item $(\rho,m)(\cdot,0)=(\rho_0,m_0)$ in $L^1(\bbt)$;
\item $L^1$-Lipschitz continuous dependence in time holds true, i.e. for $ U = (\rho,\, m)$,
        $$
            \|  U(t_2)- U(t_1)\|_{L^1(\bbt)} \leq L|t_2-t_1|,\qquad t_1,t_2\in [0,T^*)
            \,,
      $$ for some positive constant $L$;
\item $(\rho,m)$ is a solution to~\eqref{eq:system_Eulerian-p} in the sense of distributions on $\bbt\times (0,T^*)$;
\item for every pair of smooth entropy-entropy flux $(\eta,q)$, it holds
  \begin{equation*}
        \partial_t\eta(\rho,m)+\partial_x q(\rho,m) \leq \eta_m \, \mathcal{G}[\rho,m],
    \end{equation*}
    in the sense of distributions on $\bbt\times (0,T^*)$.
        \end{itemize}
\end{definition}

Using the symmetry of the kernel $K$, it is easy to verify that entropy weak solutions $(\rho,m)$ to~\eqref{eq:system_Eulerian-p} 
on $\bbt\times[0,T^*)$ conserve mass and momentum, that is,
\begin{equation*}
    \frac{1}{2}\int_\bbt \rho(x,t)\,dx=\bar\rho, \qquad \frac{1}{2}\int_\bbt m(x,t)\,dx=0\qquad \forall\, t\in [0,T^*).
\end{equation*}
It should be pointed out that the sense of distributions in the definition above requires that the corresponding relations hold for every test function $\phi\in C^1(\bbt\times (0,T^*))$ with support contained in $\bbt\times [T_1,T_2]$ for some $0<T_1<T_2<T^*$. In the case of the entropy inequality, one, of course, also needs $\phi\ge 0$.
    
The analysis lies on studying the evolution equations of the shifted variables $(\tilde\rho, \tilde m)$ defined by
\begin{equation}\label{def:tilde-var}
    \tilde\rho(x,t):=\rho(x,t)-\frac{1}{2}\int_{\bbt}\rho_0(x)\,dx\,,\qquad \tilde m(x,t):=m(x,t)-\frac{1}{2}\int_{\bbt}m_0(x)\,dx\,.
\end{equation}
The second identity in \eqref{S2: initial average} yields that $\tilde m(x,t) =m(x,t)$.

The new variables $\tilde \rho$, $\tilde m$ have both zero average over $\bbt$, i.e.
\begin{equation}\label{eq:integrals=0}
        \int_\bbt \tilde\rho(x,t)\,dx = 0,\qquad 
        \int_\bbt \tilde m(x,t)\,dx = 0,\qquad t\in[0, T^*)\,.
    \end{equation}
Then direct computations yield the system for the new variables $(\tilde\rho,\tilde m)$, that is
\begin{equation}\label{eq:intermedia system}
    \begin{cases}
\partial_t\tilde\rho+\partial_x\tilde m = 0,\\
        \partial_t \tilde m + \partial_x \left(\frac{\tilde m^2}{\tilde\rho+\bar\rho}+p(\tilde\rho+\bar\rho)\right) = \int_{\bbt}K(x-x')\Big((\tilde\rho(x)+\bar\rho)\tilde m(x')-\tilde m(x)(\tilde\rho(x')+\bar\rho)\Big)\,dx'.
    \end{cases}
\end{equation}
In a similar fashion, we define the shifted kernel $\tilde K(x) := K(x) - \bar K$ and from assumptions~\eqref{hyp-on-K}, we know that 
\begin{equation}\label{hyp-on-K tilde}
\tilde K: \bbt\to \R\,,\qquad \tilde K \in L^1(\bbt)\,,\qquad \tilde K \mbox{ even}\,,\qquad \tilde K(x)\ge 0\,.
\end{equation}
Hence, the source function from~\eqref{def:G-periodic} takes the form
\begin{equation*}
    \caG[\rho,m](x) = -2\bar K\bar\rho \, \tilde m(x) + \tilde \caG[\tilde\rho+\bar\rho, \tilde m](x)\,,
\end{equation*}
with
\begin{equation}\label{def:G-tilde}
    \tilde \caG[\rho,m](x):=\int_{\bbt} \tilde K (x-x')(\rho(x)m(x')-\rho(x')m(x))\,dx'.
\end{equation}
Indeed,
\begin{align*}
   \caG[\rho,m](x) = & \int_{\bbt}K(x-x')((\tilde\rho(x)+\bar\rho)\tilde m(x')-\tilde m(x)(\tilde\rho(x')+\bar\rho))\,dx'\\
    = & \int_{\bbt}\bar K \left[(\tilde\rho(x)+\bar\rho)\tilde m(x')-\tilde m(x)(\tilde\rho(x')+\bar\rho)\right]\,dx' \\
     &+\int_{\bbt}\tilde K(x-x')\left[(\tilde\rho(x)+\bar\rho)\tilde m(x')-\tilde m(x)(\tilde\rho(x')+\bar\rho)\right]\,dx'\\
     = & \int_\bbt \bar K (\tilde\rho(x)+\bar\rho)\tilde m(x')\,dx' - \tilde m(x)\left[\int_\bbt \bar K \tilde\rho(x')\,dx'+2\bar K \bar\rho\right] + \tilde \caG[\tilde\rho+\bar\rho,\tilde m](x)\\
     = & -2\bar K\bar\rho\tilde m(x) + \tilde \caG[\tilde\rho+\bar\rho,\tilde m](x)\;,
\end{align*}
since~\eqref{eq:integrals=0} holds true. Using these calculations, system~\eqref{eq:intermedia system} reads as
\begin{equation}\label{eq:final-system}
    \begin{cases}
        \partial_t\tilde\rho+\partial_x \tilde m=0,\\
        \partial_t \tilde m+ \partial_x \left(\frac{\tilde m^2}{\tilde\rho+\bar\rho}+p(\tilde\rho+\bar\rho)\right) = -2\bar K\bar\rho \, \tilde m(x) + \tilde \caG[\tilde \rho+\bar\rho, \tilde m]
    \end{cases}
\end{equation}
 and we assign the corresponding initial data
\begin{equation}\label{eq:init-data tilde}
(\tilde\rho,\tilde m)(x,0)=\left(\tilde\rho_0(x), \tilde m_0(x)\right)\,\qquad x\in\bbt\;,
\end{equation}
 with $\tilde\rho_0(x)=\rho_0(x)-\bar\rho$ and $\tilde m_0(x)=m_0(x)$, for $x\in\bbt$. From~\eqref{eq:bounds-rho}, the shifted density $\tilde \rho$ satisfies 
\begin{equation}\label{eq:bounds-rho tilde}
  - \bar\rho<\bar\rho_{inf}-\bar\rho \le \tilde\rho(x,t)\le \bar\rho_{sup} -\bar\rho
\end{equation}
 for $x\in\bbt$ and $t\in[0,T^*)$, and $\bar\rho\in [\bar\rho_{\inf},\bar\rho_{sup}]$ is given at~\eqref{def:tilde-var}.

The above change of variables implies that $(\rho,m)$ is an entropy weak solution to \eqref{eq:system_Eulerian-p},~\eqref{eq:init-data} on $\Omega$ 
if and only if $(\tilde\rho, \tilde m)$ is an entropy entropy weak solution to~\eqref{eq:final-system},~\eqref{eq:init-data tilde} on $\Omega$. 

\begin{definition}\label{def:2.2}
  Let $K\in L^1(\bbt)$. Let $\bar\rho>0$, $T^*>0$  and $\Omega$ as in \eqref{eq:bounds-rho}.
  Assume $(\tilde\rho_0,\tilde m_0)\in L^\infty(\bbt)$, with 
  $\tilde\rho_0(x)+\bar\rho>0$ for a.e. $x\in\bbt$, and 
  \begin{equation*}\label{eq:integrals=0 init}
        \int_\bbt \tilde\rho_0(x)\,dx =\int_\bbt \tilde m_0(x)\,dx = 0\,.
    \end{equation*}
   We say that $(\tilde\rho,\, \tilde m)$ is an entropy weak solution to the problem \eqref{eq:final-system}--\eqref{eq:init-data tilde} 
   on $\Omega$ if:
    \begin{itemize}
        \item $(\tilde\rho,\tilde m)(\cdot,0)=(\tilde\rho_0,\tilde m_0)$ in $L^1(\bbt)$;
        \item  $L^1$-Lipschitz continuous dependence in time holds true, i.e. for $\tilde U = (\tilde\rho,\tilde m)$,
        $$
            \| \tilde U(t_2)-\tilde U(t_1)\|_{L^1(\bbt)} \leq L|t_2-t_1|,\qquad t_1,t_2 \in [0,T^*)
      $$  for some positive constant $L$\,;
        \item $(\tilde\rho,\tilde m)$ is a solution to \eqref{eq:final-system} in the sense of distributions;
        \item for every pair of smooth entropy-entropy flux $(\tilde\eta,\tilde q)$ 
        with $\tilde\eta$ convex, it holds
    \end{itemize}
    \begin{equation*}
        \partial_t\tilde\eta(\tilde\rho, \tilde m)+\partial_x\tilde q(\tilde\rho,\tilde m) \leq \frac{\tilde m}{\tilde \rho+\bar\rho}\left( -2\bar K\bar\rho\, \tilde m + \tilde \caG[\tilde\rho+\bar\rho,\tilde m] \right),
    \end{equation*}
    in the sense of distributions on $\bbt\times(0,T^*)$.
\end{definition}
We state the main result for the problem \eqref{eq:final-system}--\eqref{eq:init-data tilde}.
\begin{theorem}\label{th:L2-decay}  
Assume \eqref{hyp-on-p}, \eqref{hyp-on-K tilde} and that $\bar K>0$, $\bar\rho>0$. Furthermore, assume there exists an entropy weak solution $(\tilde\rho,\tilde m)(\cdot,t)$ to problem \eqref{eq:final-system}--\eqref{eq:init-data tilde} for $t\in [0,T^*)$ that satisfies~\eqref{eq:bounds-rho tilde} on $\Omega$ for some $\bar\rho_{inf}<\bar\rho<\bar\rho_{sup}$. Then there exist positive constants $\mu$ and $c$ such that
    \begin{equation}\label{prop:three}
         \| (\tilde \rho,\tilde m)(\cdot,t) \|_{L^2(\bbt)} \leq \mu\, e^{-ct}\, \| (\tilde \rho_0,\tilde m_0)\|_{ L^2(\bbt)},\qquad \forall \, t\in [0,T^*)
    \end{equation}
    with $\mu$ and $c$ independent of time. 
\end{theorem}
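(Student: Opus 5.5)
We follow the strategy announced in the introduction and study the functional
\[
\mathcal E(t)=\int_\bbt\Big[\sigma\tilde\eta(\tilde\rho,\tilde m)+\Phi^2-\frac{1}{\bar K\bar\rho}\Phi\tilde m\Big]dx .
\]
Here $\tilde\eta$ is the relative entropy built from the mechanical energy $\eta=\frac{m^2}{2\rho}+P(\rho)$, with $P''(\rho)=p'(\rho)/\rho$, centred at $(\bar\rho,0)$:
\[
\tilde\eta=\frac{\tilde m^2}{2(\tilde\rho+\bar\rho)}+P(\tilde\rho+\bar\rho)-P(\bar\rho)-P'(\bar\rho)\tilde\rho .
\]
Since $\eta_m=m/\rho$, the entropy inequality of Definition~\ref{def:2.2} applies to $\tilde\eta$. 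By \eqref{eq:bounds-rho tilde} and $p'>0$ on the compact range of $\rho$, we have $\tilde\eta\simeq\tilde\rho^2+\tilde m^2$, with constants depending only on $\bar\rho_{inf},\bar\rho_{sup}$ and on $\|\tilde m\|_\infty$. (If $m$ is not bounded a priori we use only the lower bound $\tilde\eta\ge c(\tilde\rho^2+\tilde m^2)$ and the initial upper bound.) The potential $\Phi$ has zero mean and satisfies $\Phi_x=\tilde\rho$, so $\|\Phi\|_{L^2}\le C\|\tilde\rho\|_{L^2}$ by Poincaré. Using Young on the cross term and taking $\sigma$ large, we obtain
\[
c_1\|(\tilde\rho,\tilde m)\|_{L^2}^2\le\mathcal E(t)\le c_2\|(\tilde\rho,\tilde m)\|_{L^2}^2 .
\]

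\textbf{Step 1: entropy dissipation.} We integrate the entropy inequality over $\bbt\times[t_1,t_2]$ with test functions depending only on $t$, approximating indicators; this is justified by the $L^1$-Lipschitz continuity. The result is
\[
\frac{d}{dt}\int\tilde\eta\le\int \frac{\tilde m}{\rho}\big(-2\bar K\bar\rho\,\tilde m+\tilde\caG\big)dx
\]
in the sense of distributions in $t$. Write $v=\tilde m/\rho$. Since $\tilde K$ is even, symmetrizing gives
\[
\int v\,\tilde\caG\,dx=-\frac12\iint\tilde K(x-x')\rho(x)\rho(x')\big(v(x)-v(x')\big)^2\,dx\,dx'\le0 .
\]
This is the key point: the nonconstant part of the kernel only contributes dissipation. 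Hence
\[
\frac{d}{dt}\int\tilde\eta\le-\frac{2\bar K\bar\rho}{\bar\rho_{sup}}\|\tilde m\|_{L^2}^2-D(t),
\]
where $D\ge0$ denotes the symmetrized term.

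\textbf{Step 2: potential terms.} From the first equation, $\Phi_t=-\tilde m+\frac12\int_{-1}^1\tilde m(0,\cdot)\ldots$, and more precisely $\Phi_t=-\tilde m+\text{const}(t)$; the constant integrates to zero against zero-mean functions. Therefore
\[
\frac{d}{dt}\int\Phi^2=-2\int\Phi\tilde m .
\]
For $\frac{d}{dt}\int\Phi\tilde m$ we use the weak form of the momentum equation with test function $\Phi$, which is Lipschitz in $x$; this is admissible after mollification. We get
\[
\int\Phi_t\tilde m+\int\tilde\rho\Big(\frac{\tilde m^2}{\rho}+p(\rho)\Big)+\int\Phi\big(-2\bar K\bar\rho\,\tilde m+\tilde\caG\big),
\]
where we integrated by parts and used $\Phi_x=\tilde\rho$. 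Multiplying by $-\frac{1}{\bar K\bar\rho}$, the term $+2\int\Phi\tilde m$ exactly cancels the derivative of $\int\Phi^2$; this cancellation is why the coefficient is chosen as it is. The remaining pressure term is
\[
-\frac{1}{\bar K\bar\rho}\int\tilde\rho\,\big(p(\rho)-p(\bar\rho)\big)\le-c\|\tilde\rho\|_{L^2}^2 ,
\]
since $\int\tilde\rho\,p(\bar\rho)=0$ and $p$ is strictly increasing. The other terms are bounded by $C\|\tilde m\|^2$. For instance, $\int\tilde\rho\,\tilde m^2/\rho\le C\|\tilde m\|_{L^2}^2$ because $\tilde\rho$ is bounded.

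\textbf{Step 3: the nonlocal term and closing.} The main obstacle is the term $\frac{1}{\bar K\bar\rho}\int\Phi\,\tilde\caG$, which must be absorbed by $\sigma\big(c\|\tilde m\|^2+D\big)$. Write $\tilde m=\rho v$ and
\[
\tilde\caG(x)=\rho(x)\int\tilde K(x-x')\rho(x')\big(v(x')-v(x)\big)dx' .
\]
Cauchy--Schwarz in $x'$ gives
\[
|\tilde\caG(x)|\le\rho(x)\Big(\int\tilde K\rho\rho\,(v'-v)^2\Big)^{1/2}\Big(\int\tilde K(x-x')\,dx'\Big)^{1/2},
\]
hence $\|\tilde\caG\|_{L^2}^2\le C\|\tilde K\|_{L^1}D$, using only $\tilde K\in L^1$. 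Alternatively, $\|\tilde\caG\|_{L^2}\le C\|\tilde K\|_{L^1}\|\tilde m\|_{L^2}$ by Young's convolution inequality, since $\int\tilde m=0$ is not needed for this bound. Then
\[
\Big|\int\Phi\,\tilde\caG\Big|\le\varepsilon\|\tilde\rho\|^2+C_\varepsilon\|\tilde m\|^2 .
\]
Choosing $\varepsilon$ small relative to the pressure coercivity and then $\sigma$ large, we obtain
\[
\frac{d}{dt}\mathcal E\le-c_3\|(\tilde\rho,\tilde m)\|^2\le-\frac{c_3}{c_2}\,\mathcal E
\]
in $\mathcal D'(0,T^*)$. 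Gronwall, valid because $\mathcal E$ is continuous in time (from the $L^1$-Lipschitz bound together with the $L^\infty$ bounds), gives
\[
\mathcal E(t)\le\mathcal E(0)e^{-2ct},
\]
and norm equivalence yields \eqref{prop:three} with $\mu=\sqrt{c_2/c_1}$. The technical care lies in the time-regularization of the weak formulations; the conceptual heart is the sign of the symmetrized $\tilde K$ term together with the exact cancellation produced by the choice of the coefficient $1/(\bar K\bar\rho)$.
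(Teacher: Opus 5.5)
Your proposal is correct and follows essentially the same route as the paper: the same functional $\mathcal E$, with the coefficient $1/(\bar K\bar\rho)$ cancelling $\int\Phi\tilde m$; the symmetrization showing that the $\tilde K$-part of the source is dissipative; pressure coercivity from $\inf p'>0$ and $\int\tilde\rho=0$; and a large $\sigma$ to absorb the $\|\tilde m\|^2$ terms before Gronwall. The only differences are minor. You bound $\int\Phi\,\tilde\caG$ directly via $\|\tilde\caG\|_{L^2}\le 2\bar\rho_{sup}\|\tilde K\|_{L^1}\|\tilde m\|_{L^2}$ and Poincar\'e, instead of the paper's $J_1/J_2$ splitting. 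Your hedges are unnecessary: the entropy bounds do not depend on $\|\tilde m\|_\infty$, since $\tilde\eta$ is exactly quadratic in $\tilde m$ with coefficient $1/(2\rho)$, and the constant in $\Phi_t=-\tilde m+\text{const}(t)$ is exactly zero.
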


The analysis relies on the study of Lyapunov-type functionals that allow us to control the energy functional 
(see Subsec.~\ref{sec:2.1}) which is equivalent to the $L^2$ norm of the solution. 
The exponential decay in time of the energy functional yields the $L^2$ decay of the entropy weak solution on $\bbt$.

\subsection{The functionals}\label{sec:2.1}
This subsection introduces the functionals used to capture the decay behavior of an entropy weak solution $(\tilde\rho,\tilde m)$ 
to \eqref{eq:final-system}–\eqref{eq:init-data tilde} on $\Omega$ with $\bar\rho>0$.

\begin{enumerate}
\item The functional that measures the contribution of the convolution kernel $\tilde K$ is 
\begin{align}
    \mathcal{H}(t) & := \frac{1}{2}\int_\bbt\int_\bbt \tilde K(x-x')\left[\tilde m(x,t)\sqrt{\frac{\tilde\rho(x',t)+\bar\rho}{\tilde\rho(x,t)+\bar\rho}}-\tilde m(x',t)\sqrt{\frac{\tilde\rho(x,t)+\bar\rho}{\tilde\rho(x',t)+\bar\rho}} \,\right]^2dx\,dx'\,.\label{def:H-function}
    \end{align}
 \item The potential functional $\Phi$ is defined by
    \begin{align}
    \Phi(x,t)  & := \int_0^x \tilde\rho(y,t)\,dy - \frac{1}{2}\int_{-1}^1\int_0^z \tilde\rho(y,t)\,dy\,dz\;.\label{def:Phi-function}
    \end{align}
\item The convex entropy $\tilde\eta(\tilde\rho,\tilde m)$ of the system \eqref{eq:final-system} is given by
\begin{equation}\label{def:entropy}
    \tilde\eta(\tilde\rho,\tilde m)= (\tilde\rho+\bar\rho)(\eps(\tilde\rho+\bar\rho)-\eps(\bar\rho))-\tilde\rho\,\bar\rho\eps'(\bar\rho)+\frac{1}{2}\frac{\tilde m^2}{\tilde\rho+\bar\rho}\;,
\end{equation}
and it is normalized as follows:
\begin{equation*}
            \tilde\eta(0,0)=0,\quad \nabla\tilde\eta(0,0)=0\,.
        \end{equation*}
        We will see that it is strictly convex and there are $0<c_1<c_2$ constants such that
        \begin{equation*}
         c_1(\tilde\rho^2+\tilde m^2)\leq \tilde\eta(\tilde\rho,\tilde m)\leq c_2(\tilde \rho^2+\tilde m^2)\,,
        \end{equation*}
see the forthcoming \eqref{def:c1c2}.
\item The energy functional $\mathcal{E}(t)$ is introduced by means of the 
entropy $\tilde\eta$ and the potential functional $\Phi$,
    \begin{equation} \label{def:energy-function}
    \mathcal{E}(t) := \int_\bbt \left[ \sigma\tilde\eta + \Phi^2 - \frac{1}{\bar K\bar\rho}\Phi \tilde m \right]\,dx 
    \end{equation}
where $\sigma>0$ is a constant to be determined. 

\item Last, we need a modified kinetic energy functional $\mathcal{Z}(t)$ given by
    \begin{align}
        \mathcal{Z}(t) & := \left(\sigma\kappa-\frac{1}{\bar K}\right)\int_\bbt \frac{\tilde m^2}{\tilde\rho+ \bar\rho}\,dx + \beta \int_\bbt \tilde\rho^2\,dx,\label{def:Z-function}
\end{align}
 and the positive constants $\kappa$ and $\beta$ satisfy the inequalities
\begin{equation}\label{kappa-bound}
 0<   \kappa < 2\bar K\bar\rho\,,\qquad 0<\beta<  \frac{C}{\bar K\bar\rho}\,,
\end{equation}
with $C:= \inf_{r\in (\bar\rho_{inf},\bar\rho_{sup})} p'(r) >0$, which holds due to~\eqref{hyp-on-p} and~\eqref{eq:bounds-rho}.
\end{enumerate}
As it is shown in the forthcoming analysis, the constant $\mu$ of Theorem~\ref{th:L2-decay} satisfies 
$\mu^2=c_2\sigma + 2 + \frac{1}{\bar K\bar \rho}$, while $c_2$ and $c$ are given at~\eqref{def:c1c2} and \eqref{eq:constants-c} respectively 
and do not depend on the bounds on the momentum.
Hence, both $\mu$ and $c$ depend on the $L^\infty$ bound of the density, but not of the momentum.

In the following subsection, we study the potential functional $\Phi$ and the nonlocal functional $\mathcal{H}$ using relative entropy techniques and these results are used in Section~\ref{S3} to prove Theorem~\ref{th:L2-decay}.

\subsection{Estimates on the potential functional}\label{subsec:2.2}
To begin with, we prove an identity for the functional $\mathcal{H}(t)$ given at \eqref{def:H-function} 
in terms of the nonlocal source term $\tilde \caG$ given at~\eqref{def:G-tilde}.

\begin{proposition}\label{prop:property-G}
    Let $\bar\rho>0$ and assume $\tilde K$ satisfies properties~\eqref{hyp-on-K tilde}. 
    Then 
 for $\mathcal{H}(t)$ defined in \eqref{def:H-function},
 the following identity holds:
 \begin{equation}\label{eq:property-G}
        \mathcal{H}(t)=-\int_\bbt \frac{\tilde m(x)}{\tilde \rho(x)+\bar\rho}\tilde\caG[\tilde\rho+\bar\rho, \tilde m]\,dx \ge 0\;,
\end{equation}
for all $t\in[0,T^*)$ with $(\tilde\rho,\tilde m )\in L^1(\bbt)$.
\end{proposition}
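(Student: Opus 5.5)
The plan is a direct symmetrization argument, the standard one for alignment-type nonlocal terms. Write $\rho=\tilde\rho+\bar\rho$, so that $\rho\ge\bar\rho_{inf}>0$ by \eqref{eq:bounds-rho tilde}, and set $\vv=\tilde m/\rho$. Then
\[
\frac{\tilde m(x)}{\rho(x)}\,\tilde\caG[\rho,\tilde m](x)=\int_\bbt \tilde K(x-x')\,\vv(x)\bigl(\rho(x)\tilde m(x')-\rho(x')\tilde m(x)\bigr)\,dx'
=\int_\bbt \tilde K(x-x')\,\rho(x)\rho(x')\,\vv(x)\bigl(\vv(x')-\vv(x)\bigr)\,dx'.
\]
Integrating in $x$ gives a double integral $I:=\iint \tilde K(x-x')\rho(x)\rho(x')\vv(x)(\vv(x')-\vv(x))\,dx\,dx'$.

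Next, swap $x\leftrightarrow x'$. Since $\tilde K$ is even by \eqref{hyp-on-K tilde}, $I$ equals the same expression with $\vv(x')(\vv(x)-\vv(x'))$ in place of $\vv(x)(\vv(x')-\vv(x))$. Averaging the two forms yields
\[
-I=\frac12\iint_{\bbt\times\bbt}\tilde K(x-x')\rho(x)\rho(x')\bigl(\vv(x)-\vv(x')\bigr)^2\,dx\,dx'.
\]
Finally, observe that $\sqrt{\rho(x)\rho(x')}\,(\vv(x)-\vv(x'))=\tilde m(x)\sqrt{\rho(x')/\rho(x)}-\tilde m(x')\sqrt{\rho(x)/\rho(x')}$. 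Squaring this identity shows that $-I=\mathcal H(t)$ as defined in \eqref{def:H-function}. Nonnegativity then follows at once, because $\tilde K\ge0$ and the integrand is a square.

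The only real obstacle is justifying the manipulations (Fubini and the splitting of $I$ into two separate integrals), since $\tilde K$ is merely $L^1$. Here we use that at each fixed $t$ the solution is bounded: $\rho\in[\bar\rho_{inf},\bar\rho_{sup}]$ and $\tilde m\in L^\infty$, as for entropy weak solutions in the $L^\infty$ framework of Definition~\ref{def:2.2}. Hence $\vv$ and $\rho$ are bounded, and every integrand is dominated by $C|\tilde K(x-x')|$. This is integrable on $\bbt\times\bbt$ with norm $2\|\tilde K\|_{L^1}$, by the translation invariance of the periodic convolution. Thus all double integrals converge absolutely, and Fubini and the change of variables are legitimate.

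If one only wants to assume $(\tilde\rho,\tilde m)\in L^1$, the fallback is to run the argument first for the truncated kernel $\min(\tilde K,n)$ and then pass to the limit by monotone convergence. Monotone convergence applies on the $\mathcal H$ side, whose integrand is nonnegative, and this identifies both sides, with the right-hand side interpreted via the same limit.
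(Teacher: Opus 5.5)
Your proof is correct and follows essentially the paper's route: swap $x\leftrightarrow x'$ using the evenness of $\tilde K$, average the two expressions, and recognize a perfect square. The only difference is that working with $\vv=\tilde m/\rho$ makes the square $\rho(x)\rho(x')(\vv(x)-\vv(x'))^2$ appear directly, instead of through the expansion $2m(x)m(x')-m(x)^2\rho(x')/\rho(x)-m(x')^2\rho(x)/\rho(x')$ used in the paper.

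Your Fubini justification is a useful addition that the paper omits, and $\tilde m\in L^2$ already suffices for it. The $L^1$ truncation fallback, however, is unnecessary and does not really work: truncating $\tilde K$ does not address the actual obstruction, which is that $\int_\bbt \tilde m^2\,dx$ may be infinite, and then even the truncated double integral fails to converge absolutely.
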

\begin{proof} Thanks to the symmetry of the kernel $\tilde K$, we perform the following computations
    \begin{align*}
   - \mathcal{H}(t) =& \int_\bbt\frac{m(x)}{\rho(x)}  \tilde\caG[\rho,m]\, dx  = \int_\bbt\int_\bbt \tilde K(x-x')\frac{m(x)}{\rho(x)}[\rho(x)m(x')-\rho(x')m(x)]\,dx'dx\\
   = & \int_\bbt\int_\bbt \tilde K(x-x')\frac{m(x')}{\rho(x')}[\rho(x')m(x)-\rho(x)m(x')]\,dx\,dx'\\
   = & \frac{1}{2}\int_\bbt\int_\bbt  \tilde K(x-x')\Big[ \frac{m(x)}{\rho(x)}(\rho(x)m(x')-\rho(x')m(x))\\
    &\quad + \frac{m(x')}{\rho(x')}(\rho(x')m(x)-\rho(x)m(x'))\Big]\,dx\,dx'\\
   = & \frac{1}{2}\int_\bbt\int_\bbt \tilde K(x-x')\left[ 2m(x)m(x')-m(x)^2\frac{\rho(x')}{\rho(x)}-m(x')^2\frac{\rho(x)}{\rho(x')}\right]dx\,dx'\\
   = & -\frac{1}{2}\int_\bbt\int_\bbt \tilde K(x-x')\left[m(x)\sqrt{\frac{\rho(x')}{\rho(x)}}-m(x')\sqrt{\frac{\rho(x)}{\rho(x')}} \,\right]^2dx\,dx'
   \leq 0.
\end{align*}
The last inequality is true since $\tilde K$ is positive and employing definition~\eqref{def:H-function} for $\rho=\tilde\rho+\bar\rho$ and $m=\tilde m$. The proof is complete.
\end{proof}

We proceed with a preliminary analysis of the potential functional $ \Phi$ given at~\eqref{def:Phi-function}. For later convenience, we observe that $ \Phi$ can be rewritten as
\begin{equation}\label{def:potential_2}
    \Phi(x,t):= \psi(x,t)- <\psi(\cdot,t)>\,,      
\end{equation}
where 
\begin{equation} \psi(x,t) := \int_0^x\rho(y,t)\,dy\,,\quad x\in\bbt\,;\qquad
<\psi(\cdot,t)>:= \frac{1}{2}\int_{-1}^{1}\psi(z,t)\,dz    \,.
\end{equation}
Then we immediately deduce the equations:
\begin{align}
    \partial_x\Phi(x,t) & = \tilde\rho(x,t),\label{eq:partial-x-phi}\\
    \partial_t\Phi(x,t) & = - \tilde m(x,t).\label{eq:partial-t.phi}
\end{align}
Since $\int_{\bbt} \tilde m=0$ and $\rho$ is 2-periodic in space, then also $\Phi$ is 2-periodic in space.
Indeed, for any $x\in\bbt$,
\begin{align*}
    \Phi(x+1,t)  -\Phi(x-1,t)& =\int_0^{x+1}\tilde\rho(y,t)\,dy - \int_0^{x-1}\tilde\rho(y,t)\,dy\\
    &\qquad-\frac{1}{2}\int_\bbt\int_0^z\tilde\rho(y,t)\,dy\,dz + \frac{1}{2}\int_\bbt\int_0^z \tilde\rho(y,t)\,dy\,dz\\
    &   = \int_{x-1}^{x+1}\tilde\rho(y,t)\,dy = \int_\bbt\tilde \rho(y,t)\,dy = 0.
\end{align*}
It is also straightforward to verify that the average of $\Phi$
over a period is zero. Furthermore, we claim that
\begin{equation}\label{eq:bounds-phi}
    \int_{-1}^1 \Phi^2(x,t)\,dx \leq 2\int_{-1}^1\tilde \rho^2(x,t)\,dx\;,\qquad\forall t\in[0,T^*)\,.
\end{equation}
To prove the claim, we first rewrite $\Phi(x,t)$ as $$\Phi(x,t)=\int_{\bar x}^x \tilde\rho(y,t)\,dy$$ with $\bar x\in[-1,1)$ satisfying
$\int_0^{\bar x} \tilde\rho(y,t)\,dy = \frac{1}{2}\int_{-1}^1\int_0^z\tilde\rho(y,t)\,dydz$ and then estimate
\begin{align*}
    \int_{-1}^1 \Phi^2(x,t)\,dx & = \int_{-1}^1 \left( \int_{\bar x}^x\tilde\rho(y,t)\,dy \right)^2dx \leq \int_{-1}^1 \left[ \left( \int_{\bar x}^x1\,dy \right)^{\frac{1}{2}}\left( \int_{\bar x}^x\tilde\rho^2(y,t)\,dy \right)^{\frac{1}{2}} \right]^2dx \\
    &\quad = \int_{-1}^1 \left( \int_{\bar x}^x1\,dy \right)\left( \int_{\bar x}^x\tilde\rho^2(y,t)\,dy \right)dx \leq \int_{-1}^1 \left| \int_{\bar x}^x1\,dy \right|\left( \int_{-1}^1\tilde\rho^2(y,t)\,dy \right)dx \\
    &\quad = \left( \int_{-1}^1\tilde\rho^2(y,t)\,dy \right) \int_{-1}^1 \left| \int_{\bar x}^x1\,dy \right|dx = \left( \int_{-1}^1\tilde\rho^2(y,t)\,dy \right) \int_{-1}^1 \left| x-\bar x \right|\,dx\\
    &\quad = \left(\bar x^2+1\right)\left( \int_{-1}^1\tilde\rho^2(y,t)\,dy \right) 
    \,.
\end{align*}
Since $|\bar x|\le 1$, then claim~\eqref{eq:bounds-phi} follows.

\subsection{The entropy inequality} 
In this subsection we consider the entropy functional \eqref{def:entropy} and investigate its properties.

By~\eqref{hyp-on-p}, the system~\eqref{eq:final-system}  is endowed with a uniformly convex entropy function $\tilde\eta(\tilde\rho,\tilde m)$ normalized by $\tilde\eta(0,0)=0$ and $\nabla\tilde\eta(0,0)=0$, given by 
\begin{equation*}
    \tilde\eta(\tilde\rho,\tilde m)= (\tilde\rho+\bar\rho)(\eps(\tilde\rho+\bar\rho)-\eps(\bar\rho))-\tilde\rho\,\bar\rho\eps'(\bar\rho)+\frac{1}{2}\frac{\tilde m^2}{\tilde\rho+\bar\rho}\;,
\end{equation*}
where $\eps(\omega)$ is a function  that satisfies $\eps'(\omega)=\frac{p(\omega)}{\omega^2}$.

The gradient of the entropy is equal to
\begin{equation*}
    \nabla\tilde\eta(\tilde\rho,\tilde m)=\left(\eps(\tilde\rho+\bar\rho)-\eps(\bar\rho)+(\tilde\rho+\bar\rho)\eps'(\tilde\rho+\bar\rho)-\bar\rho\eps'(\bar\rho)-\frac{1}{2}\frac{\tilde m^2}{(\tilde\rho+\bar\rho)^2},\,\frac{\tilde m}{\tilde\rho+\bar\rho}\right),
\end{equation*}
while the Hessian matrix is equal to
\begin{equation*}
    \nabla^2{\tilde\eta}(\tilde\rho,\tilde m) = 
    \begin{bmatrix}
        (\tilde\rho+\bar\rho)p'(\tilde\rho+\bar\rho) + \dfrac{\tilde m^2}{(\tilde\rho+\bar\rho)^3} & -\dfrac{\tilde m}{(\tilde\rho+\bar\rho)^2}\\
        -\dfrac{\tilde m}{(\tilde\rho+\bar\rho)^2} & \dfrac{1}{\tilde\rho+\bar\rho}
    \end{bmatrix},
\end{equation*}
with determinant
$$\det\nabla^2{\tilde\eta}= p'+2\frac{\tilde m^2}{(\tilde\rho+\bar\rho)^4}\;.
$$ 
Using~\eqref{hyp-on-p} and~\eqref{eq:bounds-rho tilde}, we obtain that $\tilde\eta$ is strictly convex. Then, there exist two constant values $0<c_1<c_2$, depending on the range of values of $\tilde \rho$ and $\tilde m$, such that
\begin{equation}\label{eq:bounds-entropy}
    c_1(\tilde\rho^2+\tilde m^2)\leq \tilde\eta(\tilde\rho,\tilde m)\leq c_2(\tilde\rho^2+\tilde m^2)\;.
\end{equation}
The constant values can be estimated as follows: rewrite $\tilde\eta$ as 
 a sum, $\tilde\eta =  g(\tilde\rho)+h(\tilde\rho,\tilde m)$  with 
 \begin{equation*}
    g(\tilde\rho):=(\tilde\rho+\bar\rho)(\eps(\tilde\rho+\bar\rho)-\eps(\bar\rho))-\tilde\rho\,\bar\rho\eps'(\bar\rho)\,,\qquad h(\tilde\rho,\tilde m):=\frac{1}{2}\frac{\tilde m^2}{\tilde\rho+\bar\rho}\,.
 \end{equation*}
Noticing that $g(0)=g'(0)=0$ and $g''(\tilde\rho)= \frac{p'(\tilde\rho+\bar\rho)}{\tilde\rho+\bar\rho}$, we can choose
\begin{equation}\label{def:c1c2}
c_1 = \min\left\{\min_{[\bar\rho_{inf},\bar\rho_{sup}]}\frac{p'(\rho)}{\rho},\,\frac{1}{2 \bar\rho_{sup} }\right\}
\,,\qquad
c_2 = \max\left\{\max_{[\bar\rho_{inf},\bar\rho_{sup}]}\frac{p'(\rho)}{\rho},\,\frac{1}{2 \bar\rho_{inf} }\right\}.
\end{equation} 

By denoting with $\tilde q(\tilde\rho, \tilde m)$ the corresponding entropy flux, 
then the entropy weak solutions to~\eqref{eq:final-system} satisfy the inequality in the sense of distributions: 
\begin{equation}\label{eq:entropy-inequality}
    \partial_t\tilde\eta(\tilde\rho,\tilde m)+\partial_x\tilde q(\tilde\rho,\tilde m) \leq \frac{\tilde m}{\tilde\rho+\bar\rho}\left( -2\bar K\bar\rho\, \tilde m + \tilde \caG[\tilde\rho+\bar\rho,\tilde m] \right).
\end{equation}
Integrating over the torus $\bbt$ the above inequality, we obtain
\begin{equation}\label{eq:entropy-inequality-2}
    \frac{d}{dt}\int_\bbt \tilde\eta(\tilde\rho,\tilde m)\,dx \leq \int_\bbt \left\{-2\bar K\bar\rho\frac{\tilde m^2}{\tilde\rho+\bar\rho}+\frac{\tilde m}{\tilde\rho+\bar\rho}\tilde \caG[\tilde\rho+\bar\rho,\tilde m]\right\}\,dx \le 0
\end{equation}
where, in the last inequality, we employed \eqref{eq:property-G} for $\rho=\tilde\rho+\bar\rho$ and $m=\tilde m$.
Therefore we rewrite inequality~\eqref{eq:entropy-inequality-2} as
\begin{equation*}
        \frac{d}{dt}\int_\bbt \tilde\eta(\tilde\rho,\tilde m)\,dx + 2\bar K\bar\rho\int_\bbt \frac{m^2}{\tilde \rho+\bar\rho}\,dx + \mathcal{H}(t)\leq
        0\,,
    \end{equation*}
that will later yield the time decay of the entropy $\tilde\eta$. 

\section{Decay Estimate}\label{S3}
\setcounter{equation}{0}
This section is mainly devoted to the proof of Theorem~\ref{th:energy-estimates}. In Subsection~\ref{S3.1}, we compare the energy functional $\mathcal{E}(t)$ with the $L^2$ norm and show that it is also controlled by the modified kinetic energy $\mathcal{Z}(t)$. In Subsection~\ref{S3.2}, we prove that $\mathcal{E}(t)$ is nonincreasing. Finally, in Subsection~\ref{S3.3}, we conclude with the proof of Theorem~\ref{th:energy-estimates} .

\subsection{The energy functional}\label{S3.1}
We recall the energy functional introduced in \eqref{def:energy-function}: 
\begin{equation*}
    \mathcal{E}(t)=\int_\bbt \left[ \sigma\tilde\eta +\Phi^2-\frac{1}{\bar K\bar\rho}\Phi \tilde m\right]dx\,,
\end{equation*}
where $(\tilde\rho,\tilde m)$ is an entropy weak solution to~\eqref{eq:final-system} with a strictly convex entropy $\tilde\eta$ satisfying~\eqref{eq:bounds-entropy}, $\Phi$ is the potential functional given at~\eqref{def:Phi-function} and $\sigma$ a positive constant to be determined.

In the next lemma we show that, for $\sigma$ sufficiently large, the functional $ \mathcal{E}(t)$ is equivalent to the $L^2$ norm of $(\tilde\rho,\tilde m)$. 

\begin{lemma} \label{S3:lemma1} Assume \eqref{eq:bounds-rho tilde}, \eqref{def:c1c2} and
\begin{equation}\label{bound-on-sigma_1}
\sigma> \sigma_1:=\frac{1}{c_1\bar K\bar\rho}    \;.
\end{equation} 
Then the following inequalities hold: 
    \begin{equation}\label{eq:E-equiv-L2}
        \lambda_1\int_\bbt (\tilde\rho^2(x,t)+\tilde m^2(x,t) )\,dx 
\le \mathcal{E}(t)\le
\lambda_2\int_\bbt (\tilde\rho^2(x,t)+\tilde m^2(x,t))\,dx
    \end{equation}
for all $t\in[0,T^*)$, for some positive constants $\lambda_1$ and $\lambda_2$ depending on $\sigma$, $\bar K$, $\bar\rho$, $c_1$ and $c_2$.
\end{lemma}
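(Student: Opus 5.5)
The plan is to bound each of the three terms of $\mathcal{E}(t)$ separately, using three earlier results: the entropy bounds \eqref{eq:bounds-entropy}, the Poincaré-type estimate \eqref{eq:bounds-phi}, $\int_\bbt\Phi^2\,dx\le 2\int_\bbt\tilde\rho^2\,dx$, and a Young inequality on the mixed term $\Phi\tilde m$. The weight in Young's inequality has to be chosen so that the hypothesis $\sigma>\sigma_1=\frac{1}{c_1\bar K\bar\rho}$ is exactly enough. For brevity write $a:=\frac{1}{\bar K\bar\rho}$, so that $\sigma c_1>a$.

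\textbf{Upper bound.} Three estimates combine directly.
\begin{itemize}
\item By \eqref{eq:bounds-entropy}, $\sigma\int_\bbt\tilde\eta\,dx\le\sigma c_2\int_\bbt(\tilde\rho^2+\tilde m^2)\,dx$.
\item By \eqref{eq:bounds-phi}, $\int_\bbt\Phi^2\,dx\le 2\int_\bbt\tilde\rho^2\,dx$.
\item By Young's inequality, $a|\Phi\tilde m|\le\frac a2(\Phi^2+\tilde m^2)$, and then \eqref{eq:bounds-phi} again controls the $\Phi^2$ part by $a\int_\bbt\tilde\rho^2\,dx$.
\end{itemize}
Adding these gives the right inequality of \eqref{eq:E-equiv-L2} with $\lambda_2=\sigma c_2+2+a$. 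This matches $\mu^2=c_2\sigma+2+\frac1{\bar K\bar\rho}$ announced earlier.

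\textbf{Lower bound.} This is the delicate step. A naive positive-definiteness argument for the quadratic form in $(\Phi,\tilde m)$ would require $\sigma c_1>a^2/4$, which does not follow from $\sigma>\sigma_1$ when $a$ is large. To avoid this, I would apply Young's inequality with unit weight, $a|\Phi\tilde m|\le\frac a2\Phi^2+\frac a2\tilde m^2$. Together with $\sigma\tilde\eta\ge\sigma c_1(\tilde\rho^2+\tilde m^2)$ this yields
\[
\mathcal{E}(t)\ge\sigma c_1\int_\bbt\tilde\rho^2\,dx+\Big(\sigma c_1-\frac a2\Big)\int_\bbt\tilde m^2\,dx+\Big(1-\frac a2\Big)\int_\bbt\Phi^2\,dx .
\]
There are two cases for the last term.
\begin{itemize}
\item If $a\le2$, the coefficient of $\int\Phi^2$ is nonnegative and the term can be dropped.
\item If $a>2$, the coefficient is negative, and I bound it below using \eqref{eq:bounds-phi}: $\big(1-\frac a2\big)\int\Phi^2\ge-(a-2)\int\tilde\rho^2$.
\end{itemize}
In both cases
\[
\mathcal{E}(t)\ge\big(\sigma c_1-(a-2)_+\big)\int_\bbt\tilde\rho^2\,dx+\Big(\sigma c_1-\frac a2\Big)\int_\bbt\tilde m^2\,dx .
\]

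Both coefficients are strictly positive precisely because $\sigma c_1>a$: indeed $a>a/2$ and $a>(a-2)_+$. Setting
\[
\lambda_1:=\min\Big\{\sigma c_1-(a-2)_+,\ \sigma c_1-\frac a2\Big\}>0
\]
gives the left inequality of \eqref{eq:E-equiv-L2}. The constant $\lambda_1$ depends only on $\sigma,\bar K,\bar\rho,c_1$, and $\lambda_2$ only on $\sigma,\bar K,\bar\rho,c_2$, as the statement requires.

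The main obstacle is therefore the choice of Young weight. The point is to spend the negative $\Phi^2$ contribution against the $\tilde\rho^2$ part of the entropy through \eqref{eq:bounds-phi}, rather than insisting that the form in $(\Phi,\tilde m)$ be positive definite on its own. This is what lets the lemma hold under $\sigma>\sigma_1$ alone.
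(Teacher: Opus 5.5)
Your proposal is correct and follows essentially the paper's argument. It uses the entropy bounds \eqref{eq:bounds-entropy}, Young's inequality with unit weight on $\Phi\tilde m$, and \eqref{eq:bounds-phi}, and it gives the same $\lambda_2=\sigma c_2+2+\frac{1}{\bar K\bar\rho}$. The only difference is in the lower bound. The paper simply discards $\int_\bbt\Phi^2\,dx\ge 0$ and absorbs the whole term $-\frac{1}{2\bar K\bar\rho}\int_\bbt\Phi^2\,dx$ into $\tilde\rho^2$, which yields $\lambda_1=\sigma c_1-\frac{1}{\bar K\bar\rho}$ with no case split. By keeping $\int_\bbt\Phi^2\,dx$, you obtain the marginally better $\tilde\rho^2$-coefficient $\sigma c_1-\big(\frac{1}{\bar K\bar\rho}-2\big)_+$.
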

\begin{proof}
Using~\eqref{eq:bounds-rho tilde}, \eqref{eq:bounds-phi} and~\eqref{eq:bounds-entropy}, we get
\begin{align}\nonumber
    \mathcal{E}(t) & = ~\sigma\int_\bbt \tilde \eta\,dx + \int_\bbt \left\{ \Phi^2-\frac{1}{\bar K\bar\rho}\Phi \,\tilde  m\right\}\,dx \\ \nonumber
    &  \geq ~\sigma c_1\int_\bbt (\tilde \rho^2+\tilde  m^2)\,dx + \frac{1}{\bar K\bar\rho}\int_\bbt -\Phi \,\tilde  m\,dx \nonumber \\
    & \geq \sigma c_1\int_\bbt(\tilde \rho^2+\tilde  m^2)\,dx +\frac{1}{2\bar K\bar\rho}\int_\bbt \left\{-\Phi^2-\tilde  m^2 \right\}\,dx\\ \nonumber
    & \geq~ \sigma c_1\int_\bbt (\tilde \rho^2+\tilde  m^2)\,dx +\frac{1}{2\bar K\bar\rho}\int_\bbt \left\{-2\tilde \rho^2-\tilde  m^2\right\}\,dx\\ \label{E-bound-below}
    &= ~\left(\sigma c_1-\frac{1}{\bar K\bar\rho}\right)\int_\bbt \tilde \rho^2\,dx + \left(\sigma c_1-\frac{1}{2\bar K\bar\rho}\right)\int_\bbt \tilde m^2\,dx\,.
\end{align}
Thus, for $\sigma$ satisfying~\eqref{bound-on-sigma_1}, we get the bound of $\mathcal{E}(t)$ from below. 

Concerning the estimate of $\mathcal{E}(t)$ from above, we have
 \begin{align}
    \mathcal{E}(t) & \leq c_2\sigma\int_\bbt (\tilde \rho^2+\tilde m^2)\,dx +  \int_\bbt\Phi^2\,dx +\frac{1}{2\bar K\bar \rho}\int_\bbt (\Phi^2+\tilde m^2)\,dx \nonumber\\
    &\leq c_2\sigma\int_\bbt (\tilde \rho^2+\tilde m^2)\,dx + \left[ 2\int_\bbt\tilde \rho^2\,dx +\frac{1}{2\bar K\bar \rho}\int_\bbt (2\tilde \rho^2+\tilde m^2)\,dx\right] \nonumber\\
    &\quad = \left( c_2\sigma+2+\frac{1}{\bar K\bar\rho} \right)\int_\bbt \tilde \rho^2\,dx + \left( c_2\sigma+\frac{1}{2\bar K\bar\rho} \right)\int_\bbt \tilde m^2\,dx \label{E-above}
\end{align}
using again~\eqref{eq:bounds-phi} and~\eqref{eq:bounds-entropy}.
Combining~\eqref{E-bound-below} and~\eqref{E-above}, we arrive at
$$
\left(\sigma c_1-\frac{1}{\bar K\bar\rho}\right)\int_\bbt (\tilde \rho^2(x,t)+\tilde m^2(x,t) )\,dx 
\le \mathcal{E}(t)\le
\left( c_2\sigma+2+\frac{1}{\bar K\bar\rho} \right)\int_\bbt (\tilde \rho^2(x,t)+\tilde m^2(x,t))\,dx, 
$$
$\forall\, t\in[0,T^*)$.
By setting
\begin{equation*}
    \lambda_
1 :=\sigma c_1-\frac{1}{\bar K\bar\rho}\,,\qquad \lambda_
2:=c_2\sigma+2+\frac{1}{\bar K\bar\rho}\,,
\end{equation*}
we conclude that  
$0<\lambda_1<\lambda_2$ if $\sigma$ satisfies~\eqref{bound-on-sigma_1}, and that \eqref{eq:E-equiv-L2} holds. The proof is complete.
\end{proof}
Now, we recall the modified kinetic functional $\mathcal Z(t)$ defined in \eqref{def:Z-function}
with $\kappa$ and $\beta$ satisfying \eqref{kappa-bound} and, in the next proposition, we show that, under appropriate choices of the parameters, the energy $\mathcal E(t)$ is controlled by $\mathcal Z(t)$.

\begin{proposition}\label{prop:3.3}
If \eqref{def:c1c2}, \eqref{kappa-bound},~\eqref{bound-on-sigma_1} hold and 
\begin{equation}\label{bound-on-sigma_2}
\sigma>\sigma_2:=\frac{1}{\kappa\bar K}\,,
\end{equation}
then 
\begin{equation}\label{Z-E}
    \mathcal Z(t)\geq 2c \,\mathcal E(t)\,,\qquad t\in[0,T^*)
\end{equation}
where
\begin{equation}
c :=\min\left\{\frac{1}{\bar\rho_{sup}}\frac{\bar\rho(\sigma\kappa \bar K -1)}{1+2\bar K\bar\rho c_2\sigma},\frac{1}{2}\frac{\beta\bar K\bar\rho}{\bar K\bar\rho c_2\sigma+2\bar K\bar\rho+1} \right\} >0\;.\label{eq:constants-c}
\end{equation}
\end{proposition}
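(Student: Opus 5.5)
The plan is to bound $\mathcal E(t)$ from above by a combination of $\int_\bbt \tilde m^2/(\tilde\rho+\bar\rho)\,dx$ and $\int_\bbt\tilde\rho^2\,dx$, and then compare term by term with $\mathcal Z(t)$. Since the coefficient $\sigma\kappa-\frac1{\bar K}$ in $\mathcal Z$ is positive exactly when $\sigma>\sigma_2$, see \eqref{bound-on-sigma_2}, both terms of $\mathcal Z$ are nonnegative. So it suffices to show that each of the two pieces of the upper bound for $\mathcal E$ is dominated by the corresponding piece of $\mathcal Z$, up to the factor $2c$.

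First, I will estimate $\mathcal E$ from above as in \eqref{E-above}. Using \eqref{eq:bounds-entropy}, $\sigma\int\tilde\eta\le c_2\sigma\int(\tilde\rho^2+\tilde m^2)$. By Young's inequality and \eqref{eq:bounds-phi},
\begin{equation*}
-\frac{1}{\bar K\bar\rho}\int_\bbt\Phi\tilde m\le\frac{1}{2\bar K\bar\rho}\int_\bbt(\Phi^2+\tilde m^2)\le \frac{1}{2\bar K\bar\rho}\int_\bbt(2\tilde\rho^2+\tilde m^2),
\end{equation*}
and $\int\Phi^2\le 2\int\tilde\rho^2$. This gives
\begin{equation*}
\mathcal E\le A\int_\bbt\tilde\rho^2\,dx + B\int_\bbt\tilde m^2\,dx,\qquad A=c_2\sigma+2+\tfrac1{\bar K\bar\rho},\quad B=c_2\sigma+\tfrac1{2\bar K\bar\rho}.
\end{equation*}
Then I will convert $\int\tilde m^2$ into the weighted quantity using $\tilde\rho+\bar\rho\le\bar\rho_{sup}$, which gives $\int\tilde m^2\le\bar\rho_{sup}\int\frac{\tilde m^2}{\tilde\rho+\bar\rho}$.

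Next, I compare the pieces. For the $\tilde\rho^2$ part I need $2cA\le\beta$, i.e. $c\le\frac{\beta}{2A}=\frac12\frac{\beta\bar K\bar\rho}{\bar K\bar\rho c_2\sigma+2\bar K\bar\rho+1}$, which is exactly the second entry of the minimum in \eqref{eq:constants-c}. For the momentum part I need
\begin{equation*}
2c\,B\,\bar\rho_{sup}\le\sigma\kappa-\tfrac1{\bar K}=\tfrac{\sigma\kappa\bar K-1}{\bar K}.
\end{equation*}
Writing $2B=\frac{1+2\bar K\bar\rho c_2\sigma}{\bar K\bar\rho}$, this condition becomes $c\le\frac{1}{\bar\rho_{sup}}\frac{\bar\rho(\sigma\kappa\bar K-1)}{1+2\bar K\bar\rho c_2\sigma}$, which is the first entry. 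Summing the two comparisons yields \eqref{Z-E}. Positivity of $c$ follows from $\beta>0$ and $\sigma>\sigma_2$. The hypothesis \eqref{bound-on-sigma_1} is not really needed for the inequality itself; it only guarantees, via Lemma~\ref{S3:lemma1}, that $\mathcal E\ge0$, so the statement is meaningful.

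The main obstacle is purely bookkeeping. I have to make sure the constants produced by Young's inequality match the displayed $c$ exactly, in particular the factor $2$ in $2B$ and the placement of $\bar\rho_{sup}$ when passing from $\tilde m^2$ to the weighted kinetic term. I should also check that no cross term between $\tilde\rho$ and $\tilde m$ survives that would require absorbing. Since Young's inequality splits $\Phi\tilde m$ symmetrically, none does.
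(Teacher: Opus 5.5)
Your proposal is correct and follows the paper's own argument: you take the upper bound \eqref{E-above} for $\mathcal E$, pass from $\int_\bbt\tilde m^2\,dx$ to $\int_\bbt\frac{\tilde m^2}{\tilde\rho+\bar\rho}\,dx$ via $\tilde\rho+\bar\rho\le\bar\rho_{sup}$ (equivalent to the paper's lower bound on $\mathcal Z$), and match coefficients, which gives exactly the two entries of the minimum in \eqref{eq:constants-c}. Your side remark is also right: \eqref{bound-on-sigma_1} is not used in deriving the inequality itself.
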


\begin{remark}
    If the quantity $\bar K$ in \eqref{hyp-on-K} goes to $0$, then also the constant $c$ defined in \eqref{eq:constants-c} goes to $0$. Therefore, it is fundamental to consider this result in terms of a convolution kernel that is equal to the sum of a constant value and a function in $L^1(\bbt)$.
\end{remark}

\begin{proof} The function $\mathcal Z(t)$ satisfies
\begin{align*}
    \mathcal{Z}(t) & \geq \left( \sigma\kappa-\frac{1}{\bar K}\right)\frac{1}{\bar\rho_{sup}}
    \int_\bbt \tilde m^2\,dx + \beta \int_\bbt \tilde\rho^2\,dx\,,
\end{align*}
from~\eqref{eq:bounds-rho tilde}. Now first recalling the upper bound~\eqref{E-above} for the energy $\mathcal{E}(t)$ 
and then taking $c$ as in \eqref{eq:constants-c} the following inequalities are satisfied
\begin{equation*}
  2c \left( c_2\sigma+\frac{1}{2\bar K\bar\rho} \right)\le \left(\sigma\kappa-\frac{1}{\bar K}\right)\frac{1}{\bar\rho}_{sup}
  \,,\qquad 
  2c\left(  c_2\sigma+2+\frac{1}{\bar K\bar\rho} \right)\le \beta \,,
\end{equation*}
Thus estimate~\eqref{Z-E} holds true and the proof is complete.
\end{proof}

\subsection{A Gronwall estimate for the energy}\label{S3.2}
This subsection is devoted in study of the evolution of the energy functional and it is shown that is nonincreasing in time, i.e., 
\begin{equation*}
    \mathcal{E}(t_2)-\mathcal{E}(t_1)\leq 0,\qquad \forall\, t_2,\,t_1\in[0,T^*)\quad\text{for}\quad t_2>t_1.
\end{equation*}
The following theorem states the key properties of $\mathcal{E}(t)$ that allows us to prove Theorem~\ref{th:L2-decay}. More precisely, this provides the time decay of the energy functional $\mathcal{E}(t)$ in a Gronwall-type setting that would later enable us to extract an uniform bound on the $L^2$ norm of solutions $(\tilde\rho,\tilde m)$. 

\begin{theorem}\label{th:energy-estimates}
    Under the assumptions of Theorem \ref{th:L2-decay}, 
if $\sigma$ is chosen sufficiently large and \eqref{kappa-bound} hold,
   then the functionals $\mathcal H$, $\mathcal E$ and $\mathcal Z$ given at~\eqref{def:H-function}, \eqref{def:energy-function} and \eqref{def:Z-function} respectively, satisfy
    \begin{equation}\label{eq:inequality-EHZ1}
        \mathcal E(t_2)-\mathcal E(t_1) + \int_{t_1}^{t_2} \left[\sigma \mathcal H(t) +  \mathcal Z(t) \right]\,dt \leq 0
    \end{equation}
and  
    \begin{equation}\label{eq:gronw-energy}
        \mathcal{E}(t) + \sigma\int_0^t \mathcal H(s)e^{-2c(t-s)}\,ds \leq \mathcal E(0)e^{-2ct}
    \end{equation}
for any $t,\,t_1,\,t_2\in[0, T^*)$ with $t_2>t_1$, where $c$ is given in \eqref{eq:constants-c}. 
\end{theorem}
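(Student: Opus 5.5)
We compute the time evolution of the three pieces of $\mathcal E$ separately, working in the weak (distributional) sense, and then combine them.

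\textbf{Entropy part.} Testing the entropy inequality \eqref{eq:entropy-inequality} with a test function $\phi(t)$ independent of $x$ and letting $\phi$ approximate $\charf_{[t_1,t_2]}$ is justified by the $L^1$-Lipschitz continuity in time and the $L^\infty$ bounds. Using Proposition~\ref{prop:property-G}, this yields
\begin{equation*}
\sigma\int_\bbt\tilde\eta\,dx\Big|_{t_1}^{t_2}+\sigma\int_{t_1}^{t_2}\Big[2\bar K\bar\rho\int_\bbt\frac{\tilde m^2}{\tilde\rho+\bar\rho}dx+\mathcal H\Big]dt\le 0 .
\end{equation*}

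\textbf{Potential terms.} From \eqref{eq:partial-t.phi} we get $\frac{d}{dt}\int\Phi^2=-2\int\Phi\tilde m$. For the cross term we use the momentum equation tested against $\Phi$, which is Lipschitz in $x$ and Lipschitz in $t$ in $L^1$, so a mollification argument is legitimate. The computation is
\begin{equation*}
\frac{d}{dt}\int\Phi\tilde m=-\int\tilde m^2+\int\tilde\rho\Big(\frac{\tilde m^2}{\tilde\rho+\bar\rho}+p(\tilde\rho+\bar\rho)\Big)-2\bar K\bar\rho\int\Phi\tilde m+\int\Phi\,\tilde\caG .
\end{equation*}
Multiplying by $-\frac{1}{\bar K\bar\rho}$, the term $+2\int\Phi\tilde m$ exactly cancels the $-2\int\Phi\tilde m$ coming from $\frac{d}{dt}\int\Phi^2$; this is why the coefficient $\frac1{\bar K\bar\rho}$ was chosen. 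The pressure term gives dissipation in $\tilde\rho$. Since $\int\tilde\rho=0$, we have $\int\tilde\rho\,p(\tilde\rho+\bar\rho)=\int\tilde\rho\,(p(\tilde\rho+\bar\rho)-p(\bar\rho))\ge C\int\tilde\rho^2$. The kinetic pieces combine to $\frac1{\bar K\bar\rho}\int\frac{\tilde m^2\,\bar\rho}{\tilde\rho+\bar\rho}=\frac1{\bar K}\int\frac{\tilde m^2}{\tilde\rho+\bar\rho}$, which has a harmful sign and is absorbed by the entropy dissipation. That absorption explains the factor $\sigma\kappa-\frac1{\bar K}$ in $\mathcal Z$: we split $2\bar K\bar\rho=\kappa+(2\bar K\bar\rho-\kappa)$, keeping $\kappa$ in $\mathcal Z$ and saving the remainder for the next step.

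\textbf{Main obstacle: the nonlocal term} $\frac{1}{\bar K\bar\rho}\int\Phi\,\tilde\caG$. I plan to write $\tilde\caG[\rho,\tilde m](x)=\int\tilde K(x-x')\sqrt{\rho(x)\rho(x')}\,D(x,x')\,dx'$, where $D$ is the bracket appearing in $\mathcal H$. Cauchy--Schwarz in $(x,x')$ then gives
\begin{equation*}
\Big|\int\Phi\tilde\caG\Big|\le \sqrt{2\mathcal H}\Big(\iint\tilde K\,\rho\rho'\,\Phi^2\Big)^{1/2}\le \sqrt{2\mathcal H}\,\bar\rho_{sup}\|\tilde K\|_{L^1}^{1/2}\|\Phi\|_{L^2}.
\end{equation*}
Young's inequality bounds this by $\frac{\sigma}{2}\mathcal H+\frac{C'}{\sigma}\int\tilde\rho^2$ via \eqref{eq:bounds-phi}. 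For $\sigma$ large, the $\tilde\rho^2$ contribution is absorbed by $\frac{C}{\bar K\bar\rho}\int\tilde\rho^2$, leaving $\beta<\frac{C}{\bar K\bar\rho}$. This only leaves $\frac\sigma2\mathcal H$ instead of $\sigma\mathcal H$. To recover the full $\sigma\mathcal H$ I would either enlarge $\sigma$ and rescale, noting that the statement's $\sigma$ is free, or absorb using the spare portion $(2\bar K\bar\rho-\kappa)\sigma$ of the entropy dissipation. Concretely, I would run the argument with $2\sigma$ in the entropy part and accept the adjusted constants. This yields \eqref{eq:inequality-EHZ1} for $\sigma>\max\{\sigma_1,\sigma_2,\sigma_3\}$.

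\textbf{Gronwall step.} For \eqref{eq:gronw-energy}, Proposition~\ref{prop:3.3} gives $\mathcal Z\ge2c\mathcal E$. Hence $\mathcal E(t_2)-\mathcal E(t_1)+\int_{t_1}^{t_2}[\sigma\mathcal H+2c\mathcal E]\le0$, with $\mathcal E$ Lipschitz (hence absolutely continuous) in $t$. So a.e. $\frac{d}{dt}(e^{2ct}\mathcal E)+\sigma e^{2ct}\mathcal H\le0$, and integrating from $0$ to $t$ and multiplying by $e^{-2ct}$ gives the claim.
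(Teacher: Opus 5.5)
Your handling of the entropy part, the $\Phi^2$ and $\Phi\tilde m$ identities (including the cancellation of $\pm2\int\Phi\tilde m$), the pressure bound, the split $2\bar K\bar\rho=\kappa+(2\bar K\bar\rho-\kappa)$, and the Gronwall step all match the paper. The difference is the nonlocal cross term, and there your argument has a gap at the final step.

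\textbf{How the two routes differ.} The paper never uses $\mathcal H$ to control $\frac{1}{\bar K\bar\rho}\int\Phi\,\tilde\caG$. It writes $\rho=\tilde\rho+\bar\rho$ and splits the term into $J_1+J_2$. Both pieces are bounded by Young with a free weight $\alpha$, in terms of $\int\tilde\rho^2$ and $\int\tilde m^2$ only. The paper then takes $\alpha$ large so the $\tilde\rho^2$ part fits under $\frac{C}{\bar K\bar\rho}-\beta$. Next it takes $\sigma$ large so that the $\alpha^2\int\tilde m^2$ part is absorbed by the spare dissipation $\sigma(2\bar K\bar\rho-\kappa)\int\frac{\tilde m^2}{\tilde\rho+\bar\rho}$. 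The full $\sigma\mathcal H$ is never touched. Your Cauchy--Schwarz against $\mathcal H$ is correct and shorter, but it spends part of that dissipation.

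\textbf{The gap: you do not recover the full coefficient $\sigma$ in front of $\mathcal H$.} As written, you prove \eqref{eq:inequality-EHZ1}, and hence \eqref{eq:gronw-energy}, only with $\frac{\sigma}{2}\mathcal H$. None of your three fixes closes this.
\begin{itemize}
\item Rescaling does not help. $\sigma$ multiplies both $\int\tilde\eta$ in $\mathcal E$ and the $\mathcal H$-dissipation, so with your split the ratio stays $\frac12$ for every $\sigma$.
\item Running with $2\sigma$ in the entropy part is only a relabeling. It proves an inequality for a different functional, namely $\mathcal E$ built with $2\sigma\tilde\eta$ and $\mathcal Z$ built with $2\sigma\kappa$.
\item Absorbing $\frac{\sigma}{2}\mathcal H$ into the spare dissipation fails in general. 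Both sides are linear in $\sigma$, so the requirement is a $\sigma$-independent smallness condition of the type $\bar\rho_{sup}\|\tilde K\|_{L^1}\le 2\bar K\bar\rho-\kappa$. This is not among the hypotheses. It is also not an artifact: for $\rho\equiv\bar\rho$ and rapidly oscillating $\tilde m$, $\mathcal H$ is close to $\|\tilde K\|_{L^1}\int\tilde m^2$.
\end{itemize}

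\textbf{The missing ingredients.} You need two things. First, from $(a-b)^2\le 2a^2+2b^2$ and evenness of $\tilde K$,
\[
\mathcal H(t)\le 2\bar\rho_{sup}\|\tilde K\|_{L^1}\int_\bbt\frac{\tilde m^2}{\tilde\rho+\bar\rho}\,dx .
\]
Second, use Young with a small weight rather than $\frac12$:
\[
\frac{1}{\bar K\bar\rho}\Big|\int_\bbt\Phi\,\tilde\caG\,dx\Big|\le \delta\sigma\,\mathcal H+\frac{B^2}{\delta\sigma}\int_\bbt\tilde\rho^2\,dx,\qquad B:=\frac{\bar\rho_{sup}\|\tilde K\|_{L^1}^{1/2}}{\bar K\bar\rho}.
\]
Choose $\delta:=\frac{2\bar K\bar\rho-\kappa}{2\bar\rho_{sup}\|\tilde K\|_{L^1}}$, which is independent of $\sigma$. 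Then $\delta\sigma\mathcal H$ is absorbed by $\sigma(2\bar K\bar\rho-\kappa)\int\frac{\tilde m^2}{\tilde\rho+\bar\rho}$. Taking $\sigma\ge \frac{B^2}{\delta\,(C/(\bar K\bar\rho)-\beta)}$ then handles the $\tilde\rho^2$ term, and the full $\sigma\mathcal H$ survives on the left.

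At that point your detour through $\mathcal H$ has become the paper's structure again: the cross term is bounded by $\int\tilde\rho^2$ and $\int\tilde m^2$ alone. For the downstream Theorem~\ref{th:L2-decay} your weaker inequality with $\frac{\sigma}{2}\mathcal H$ would suffice, since the nonnegative $\mathcal H$ term is dropped there. It does not, however, prove the statement as given.
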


\begin{proof} 
To begin with, we multiply the second equation of system~\eqref{eq:final-system} by the potential $\Phi(t)$, 
then add the term 
$\tilde m\partial_t\Phi+\left( \frac{\tilde m^2}{\tilde\rho+\bar\rho}+p(\tilde\rho+\bar\rho) \right)\partial_x\Phi$ 
to get
\begin{align}
    &\partial_t(\Phi \tilde m)+\partial_x \left(\Phi\left(\frac{\tilde m^2}{\tilde \rho+\bar\rho}+p(\tilde\rho+\bar\rho)\right)\right) \nonumber\\
    &\qquad =\Phi(-2\bar K \bar\rho \,\tilde m + \tilde \caG[\tilde\rho+\bar\rho,\tilde m])-\tilde m^2+\tilde\rho\left( \frac{\tilde m^2}{\tilde\rho+\bar\rho}+p(\tilde\rho+\bar\rho) \right)\nonumber\\
    &\qquad = -2\Phi\bar K \bar\rho \tilde m + \Phi\tilde \caG[\tilde\rho+\bar\rho,\tilde m] -\tilde m^2 \frac{\bar \rho}{\tilde\rho+\bar\rho}+\tilde\rho \,p(\tilde\rho+\bar\rho)\label{eq:Phi-m}\,,
\end{align}
using \eqref{eq:partial-x-phi}--\eqref{eq:partial-t.phi}. Furthermore, multiplying \eqref{eq:partial-t.phi} by $2\Phi$, we obtain 
\begin{equation}\label{eq:Phi^2}
\partial_t\Phi^2 = -2\tilde m\,\Phi.
\end{equation}
Now, we write inequality \eqref{eq:entropy-inequality} and equations~\eqref{eq:Phi-m} and \eqref{eq:Phi^2} in distributional sense. 
Indeed, for any nonnegative periodic test function $\varphi\in C^1(\bbt\times (0,T^*))$ with support contained in $\bbt\times [t_1,t_2]$ 
for some $0<t_1<t_2<T^*$, we combine these three integral relations to reach the inequality 
\begin{align}
    \int_0^{T^*}\int_\bbt & \left(\sigma\tilde\eta +\Phi^2 -\frac{1}{\bar K\bar\rho}\Phi \,\tilde m\right)\partial_t\varphi\,dx\,dt  \nonumber\\ 
    & \geq \int_0^{T^*}\int_\bbt -\sigma\tilde q\partial_x\varphi +\sigma\frac{\tilde m}{\tilde\rho+\bar\rho}\left( 2\bar K\bar\rho \,\tilde m -\tilde \caG[\tilde\rho+\bar\rho,\tilde m] \right)\varphi\,dx\,dt\nonumber\\
    & \quad+\int_0^{T^*}\int_\bbt \frac{1}{\bar K\bar\rho}\left( \Phi\left(\frac{\tilde m}{\tilde\rho+\bar\rho}+p(\tilde\rho+\bar\rho)\right) \right)\partial_x \varphi \,dx\,dt\nonumber\\
    &\quad+\int_0^{T^*}\int_\bbt -\frac{1}{\bar K\bar\rho}\left( 2\Phi\bar K\bar\rho \,\tilde m - \Phi\,\tilde \caG[\tilde\rho+\bar\rho, \tilde m] + \tilde m^2\frac{\bar\rho}{\tilde\rho+\bar\rho} -\tilde\rho \,p(\tilde\rho+\bar\rho)\right)\varphi\,dx\,dt\nonumber\\
    &\quad +\int_0^{T^*}\int_\bbt 2\tilde m\,\Phi\,\varphi\,dx\,dt\,.\label{eq:ineq-energy-1}
\end{align}
Notice that the right hand side of the inequality above is equal to
\begin{align}
    \int_0^{T^*}\int_\bbt & \left[ \frac{1}{\bar K\bar\rho}\left( \Phi\frac{\tilde m}{\tilde\rho+\bar\rho}+\Phi  \,p(\tilde\rho+\bar\rho) \right) - \sigma \tilde q\right]\partial_x\varphi\,dx\,dt+\left(2\sigma\bar K\bar\rho-\frac{1}{\bar K}\right)\int_0^{T^*}\int_\bbt \frac{\tilde m^2}{\tilde\rho+\bar\rho}\varphi\,dx\,dt\nonumber\\
    &\quad+ \int_0^{T^*}\int_\bbt\left[ \frac{1}{\bar K\bar\rho} \left( \Phi\,\tilde \caG[\tilde\rho+\bar\rho,\tilde m]+\tilde\rho\, p(\tilde\rho+\bar\rho) \right) - \sigma\frac{\tilde m}{\tilde\rho+\bar\rho}\tilde \caG[\tilde\rho+\bar\rho,\tilde m]\right]\varphi\,dx\,dt\,.\label{eq:distrib-ineq-energy}
\end{align}
Given $0<t_1<t_2<T^*$, let $\eps>0$ be small such that $t_1-\eps\geq0$.
We now select the test function to be  $\varphi(x,t)=\chi(x)h_\eps(t)\in C^1(\bbt\times (0,T^*))$ with
\begin{align*}
    \chi(x) & :=1,\qquad x\in\bbt\\
    h_\eps(t) & := 
    \begin{cases}
        0, & 0\leq t<t_1-\eps,\\
        \eps^{-1}(t-t_1)+1,\quad  &  t_1-\eps \leq t < t_1,\\
        1, & t_1 \leq t \leq t_2,\\
        \eps^{-1}(t_2-t) + 1, & t_2 < t \leq t_2+\eps,\\
        0, & t> t_2+\eps,
    \end{cases}\qquad t\in (0,T^*)
\end{align*}
and evaluate~\eqref{eq:distrib-ineq-energy} for the test function $\phi(x,t)=\chi(x)h_\eps(t)$ to get
\begin{align*}
    & \left( 2\sigma\bar K\bar\rho-\frac{1}{\bar K}\right)\int_{t_1-\eps}^{t_2+\eps}\int_\bbt \frac{\tilde m^2}{\tilde\rho+\bar\rho}h_\eps(t)\,dx\,dt\\
    & \qquad+ \int_{t_1-\eps}^{t_2+\eps}\int_\bbt \left[ \frac{1}{\bar K\bar\rho}\left( \Phi\, \tilde \caG[\tilde\rho+\bar\rho, \tilde m] + \tilde\rho\, p(\tilde\rho+\bar\rho) \right) - \sigma\frac{\tilde m}{\tilde\rho+\bar\rho}\tilde \caG [\tilde\rho+\bar\rho, \tilde m] \right]h_\eps(t)\,dx\,dt.
\end{align*}
Using the definition~\eqref{def:energy-function} of $\mathcal{E}(t)$, inequality~\eqref{eq:ineq-energy-1}  and the above computations, we now let $\eps\to 0+$ to obtain
\begin{align}
    \mathcal{E}(t_2) & -\mathcal{E}(t_1) \leq  -\left( 2\sigma\bar K\bar\rho-\frac{1}{\bar K}\right)\int_{t_1}^{t_2}\int_\bbt \frac{\tilde m^2}{\tilde\rho+\bar\rho}\,dx\,dt\nonumber\\
    & \quad- \int_{t_1}^{t_2}\int_\bbt \left[ \frac{1}{\bar K\bar\rho}\left( \Phi \,\tilde \caG[\tilde\rho+\bar\rho, \tilde m] +\tilde \rho \,p(\tilde\rho+\bar\rho) \right) - \sigma\frac{\tilde m}{\tilde\rho+\bar\rho}\tilde \caG [\tilde\rho+\bar\rho,\tilde m] \right]\,dx\,dt.\nonumber
\end{align}
The last inequality can be rewritten as
\begin{align}\label{eq:en H ineq}
    \mathcal E(t_2)-\mathcal E(t_1) + \sigma\int_{t_1}^{t_2} \mathcal H(t)\,dt \leq -\left( 2\sigma\bar K\bar\rho-\frac{1}{\bar K}\right)\int_{t_1}^{t_2}\int_\bbt \frac{\tilde m^2}{\tilde\rho+\bar\rho}\,dx\,dt\nonumber\\
    - \int_{t_1}^{t_2}\int_\bbt \left[ \frac{1}{\bar K\bar\rho}\left( \Phi \tilde \caG[\tilde\rho+\bar\rho, \tilde m] +\tilde \rho \,p(\tilde\rho+\bar\rho) \right)\right]\,dx\,dt\;,
\end{align}
using~\eqref{eq:property-G} for $\rho=\tilde\rho+\bar\rho$ and $m=\tilde m$.

Recalling \eqref{hyp-on-p}, we can use Taylor expansion with Lagrange remainder
$$p(\tilde\rho+\bar\rho) = p(\bar\rho)+ p'(\xi)\tilde\rho    
$$
for some $\xi\in (\bar\rho_{inf},\bar\rho_{sup})$. Hence 
$$\tilde\rho p(\tilde\rho+\bar\rho) =\tilde\rho p(\bar\rho)+ p'(\xi)\tilde\rho^2 \ge \tilde\rho p(\bar\rho)+ C\tilde\rho^2\;,
$$
with 
$$C:= \inf_{r\in (\bar\rho_{inf},\bar\rho_{sup})} p'(r) >0\;.$$
Since $\int_{\bbt}\tilde \rho \,dx=0$, then
$$\int_\bbt \tilde\rho p(\tilde\rho+\bar\rho)\,dx \ge C \int_\bbt\tilde \rho^2 \,dx,
$$
and in view of the above analysis, inequality~\eqref{eq:en H ineq} takes the form
\begin{align}\label{eq:E-H}
    \mathcal E(t_2)-\mathcal E(t_1) + \sigma\int_{t_1}^{t_2} \mathcal H(t)\,dt \leq -\left( 2\sigma\bar K\bar\rho-\frac{1}{\bar K}\right)\int_{t_1}^{t_2}\int_\bbt \frac{\tilde m^2}{\tilde\rho+\bar\rho}\,dx\,dt\nonumber\\
    - \int_{t_1}^{t_2}\int_\bbt \left[ \frac{1}{\bar K\bar\rho}\left( \Phi \,\tilde \caG[\tilde\rho+\bar\rho,\tilde m] + C \tilde\rho^2 \right)\right]\,dx\,dt.
\end{align}
Next, by means of the definition of $\mathcal{Z}$ in \eqref{def:Z-function} we find
\begin{align}
    & \mathcal{E}(t_2)-\mathcal{E}(t_1) + \int_{t_1}^{t_2}\left[\sigma \mathcal{H}(t)+ \mathcal{Z}(t)\right]\,dt \nonumber\\
   & \qquad \le 
       - \sigma(2\bar K\bar\rho-\kappa)\int_{t_1}^{t_2}\int_\bbt\frac{\tilde m^2}{\tilde\rho+\bar\rho}\,dx  + \left(\beta -\frac{C}{\bar K\bar\rho}\right)\int_{t_1}^{t_2}\int_\bbt \tilde\rho^2\,dx \nonumber \\
        &\qquad \quad - \frac{1}{\bar K\bar\rho}\int_{t_1}^{t_2}\int_\bbt\Phi\, \tilde \caG(\tilde\rho+\bar\rho,   \tilde m)\,dx.\label{E+sH+Z}
\end{align}
Because of \eqref{kappa-bound}, we know that the first two terms are negative. 
Hence, it remains to control 
the last term $-\frac{1}{\bar K\bar\rho}\int_\bbt\Phi(x)\,\tilde \caG[\tilde\rho+\bar\rho,\tilde m](x)\,dx$. 
This one can be split into two integrals as follows:
\begin{align*}
     - & \frac{1}{\bar K\bar\rho} \int_\bbt\Phi(x)\tilde \caG[\tilde\rho+\bar\rho,\tilde m](x)\,dx \\
    & = -\frac{1}{\bar K\bar\rho}\int_\bbt\int_\bbt \tilde K (x-x')\Phi(x)[\tilde\rho(x) \tilde m(x')-\tilde\rho(x')\tilde m(x)]\,dx\,dx'\\
    &\qquad-\frac{1}{\bar K}\int_\bbt\int_\bbt \tilde K (x-x')\Phi(x)[\tilde m(x') - \tilde m(x)]\,dx\,dx'=:J_1+J_2
\end{align*}
Noting that, by the symmetry of the integrand, it holds
\begin{equation*}
    \frac{1}{\bar K\bar\rho}\int_\bbt\int_\bbt \tilde K(x-x')[\tilde \rho(x)\tilde m(x')-\tilde\rho(x')\tilde m(x)]\,dxdx'=0, 
\end{equation*}
then from~\eqref{def:potential_2}, we can reduce term $J_1$ to
\begin{align*}
J_1=- & \frac{1}{\bar K\bar\rho}\int_\bbt\int_\bbt \tilde K (x-x')\psi(x)[\tilde\rho(x) \tilde m(x')-\tilde\rho(x') \tilde m(x)]\,dx\,dx'\;.
\end{align*}
By estimating, we get 
\begin{align*}
 J_1=   -\frac{1}{\bar K\bar\rho} & \int_\bbt\int_\bbt \tilde K (x-x')\psi(x)[\tilde \rho(x) \tilde m(x')-\tilde \rho(x')\tilde m(x)]\,dx\,dx'\\
    \leq & \frac{1}{\bar K\bar\rho}\int_\bbt\int_\bbt |\psi(x)|\tilde K(x-x')|\tilde\rho(x) \tilde m(x') - \tilde\rho(x') \tilde m(x)|\,dxdx'\\
    \leq & \frac{1}{\bar K\bar\rho}\int_\bbt\int_\bbt (\bar\rho_{sup}-\bar\rho)\tilde  K(x-x')(|\tilde \rho(x) \tilde m(x')|+|\tilde\rho(x') \tilde m(x)|)\,dxdx'\\
    = & \frac{2(\bar\rho_{sup}-\bar\rho)}{\bar K\bar\rho}\int_\bbt\int_\bbt \tilde K(x-x')|\tilde\rho(x) \tilde m(x')|\,dxdx'\\
    \leq & \frac{(\bar\rho_{sup}-\bar\rho)}{\bar K\bar\rho}\int_\bbt\int_\bbt \tilde K(x-x')[\frac{1}{\alpha^2}\tilde\rho^2(x)+\alpha^2 \tilde m^2(x')]\,dxdx'\\
    = & \frac{\|\tilde K\|_{L^1}(\bar\rho_{sup}-\bar\rho)}{\bar K\bar\rho}\int_\bbt[\frac{1}{\alpha^2}\tilde\rho^2(x)+\alpha^2 \tilde m^2(x)]\,dx,
\end{align*}
for a parameter $\alpha>0$ to be determined. About $J_2$, we proceed in a similar fashion and also use~\eqref{eq:bounds-phi} to obtain
\begin{align*}
    J_2 =~&-\frac{1}{\bar K}  \int_\bbt\int_\bbt \tilde K (x-x')\Phi(x)[\tilde m(x') - \tilde m(x)]\,dx\,dx'\\ 
    =~& -\frac{1}{\bar K}\int_\bbt\int_\bbt \tilde K(x-x')\Phi(x)\tilde m(x')\,dxdx'
   + \frac{1}{\bar K}\int_\bbt\int_\bbt \tilde K(x-x')\Phi(x)\tilde m(x)\,dxdx'\\
     \leq ~& \frac{1}{2\bar K}\int_\bbt\int_\bbt \tilde K(x-x')[\frac{1}{\alpha^2}\Phi^2(x)+\alpha^2\tilde m^2(x')]\,dxdx'\\
    &\qquad \qquad + \frac{1}{2\bar K}\int_\bbt\int_\bbt \tilde K(x-x')[\frac{1}{\alpha^2}\Phi^2(x)+\alpha^2\tilde m^2(x)]\,dxdx'\\
   =~ &\frac{1}{\bar K\alpha^2}\int_\bbt\int_\bbt \tilde K(x-x')\Phi^2(x)\,dxdx' + \frac{\alpha^2}{\bar K}\int_\bbt\int_\bbt \tilde K(x-x') \tilde m^2(x)\,dxdx'\\
    =  ~&  \frac{\| \tilde K\|_{L^1}}{\bar K\alpha^2}\int_\bbt\Phi^2(x)\,dx + \frac{\alpha^2\| \tilde K\|_{L^1}}{\bar K}\int_\bbt  \tilde m^2(x)\,dx\\
  \leq ~&  \frac{2\| \tilde K\|_{L^1}}{\alpha^2\bar K}\int_\bbt\tilde\rho^2(x)\,dx + \frac{\alpha^2\| \tilde K\|_{L^1}}{\bar K}\int_\bbt  \tilde m^2(x)\,dx.
\end{align*}
Combining the above with estimate~\eqref{E+sH+Z}, we arrive at
\begin{align}
    \mathcal{E}(t_2)  &-\mathcal{E}(t_1)  + \int_{t_1}^{t_2}\left[\sigma \mathcal{H}(t) + \mathcal{Z}(t)\right]\,dt \nonumber\\
    & \leq \left[- \frac{\sigma(2\bar K\bar\rho-\kappa)}{\bar\rho_{sup}}+\frac{\|\tilde K\|_{L^1}}{\bar K}\alpha^2\left(  1+\frac{\bar\rho_{sup}-\bar\rho}{\bar\rho}\right)\right]\int_{t_1}^{t_2}\int_\bbt \tilde m^2\,dx \nonumber\\
    & \qquad + \left[\frac{\|\tilde K\|_{L^1}}{\bar K\alpha^2}\left( 2+\frac{\bar\rho_{sup}-\bar\rho}{\bar\rho} \right) + \beta - \frac{C}{\bar K\bar\rho}\right]\int_{t_1}^{t_2}\int_\bbt\tilde\rho^2\,dx\,. \label{eq:E+sigma_H+Z}
\end{align}
Having $\kappa$ and $\beta$ satisfying~\eqref{kappa-bound}, we choose $\alpha$ and $\sigma$ 
so that the right hand side of inequality~\eqref{eq:E+sigma_H+Z} to be non-positive. 
So, we first select $\alpha>0$ to fulfill
\begin{equation}\label{cond-on-alpha}
\alpha^2\geq \frac{\| \tilde K \|_{L^1(\bbt)}(\bar\rho+\bar\rho_{sup})}{C-\beta\bar K\bar\rho}
\end{equation}
and then $\sigma>\sigma_3$, with
\begin{equation}\label{bound-on-sigma_3}
\sigma_3:=\alpha^2 \frac{\| \tilde K \|_{L^1(\bbt)}}{\bar\rho\bar K}\frac{\bar\rho_{sup}^2}{2\bar K\bar\rho-\kappa}\;.
\end{equation}
Thus,~\eqref{eq:inequality-EHZ1} holds true. Moreover, thanks to Proposition \ref{prop:3.3}, it holds true~\eqref{eq:gronw-energy} by Gronwall Lemma. 
In summary, if $\sigma>\max\{\sigma_1,\sigma_2,\sigma_3\}$ given at ~\eqref{bound-on-sigma_1}, \eqref{bound-on-sigma_2} and \eqref{bound-on-sigma_3}, that is
\begin{equation}
\sigma  > \max\left\{ \alpha^2 \frac{\| \tilde K \|_{L^1(\bbt)}}{\bar\rho\bar K}\frac{\bar\rho_{sup}^2}{2\bar K\bar\rho-\kappa}, \frac 1 {\kappa \bar K}, \frac{1}{c_1\bar K\bar\rho}\right\},\label{eq:constants}    
\end{equation}
then both~\eqref{eq:inequality-EHZ1} and ~\eqref{eq:gronw-energy} follow. The proof of Theorem~\ref{th:energy-estimates} is complete.
\end{proof}

\subsection{Proof of Theorems \ref{th:L2-decay} and~\ref{thm 1:L2-decay}}\label{S3.3}
At this point, we have all the ingredients needed to prove Theorem~\ref{th:L2-decay} and its counterpart in the original variables $(\rho,m)$, Theorem~\ref{thm 1:L2-decay}.

\begin{proof}[Proof of Theorem \ref{th:L2-decay}]
Having an entropy weak solution $(\tilde\rho,\tilde m)$ to \eqref{eq:final-system}--\eqref{eq:init-data tilde} 
that satisfy Theorem~\ref{th:energy-estimates} for appropriate choice of parameters, we can show decay in $L^2$ norm on $\bbt$ as follows.

By Lemma~\ref{S3:lemma1}, the energy is bounded from below by the $L^2$ norm of $(\tilde\rho,\tilde m)$: in particular, there exists $\sigma_0>0$ such that
\begin{equation}\label{S3.3 e L2}
   \mathcal{E}(t)\ge \left(\sigma c_1-\frac{1}{\bar K\bar\rho}\right)\int_\bbt \left(\tilde\rho^2+\tilde m^2\right)\,dx \geq \sigma_0 \int_\bbt \left(\tilde\rho^2 +\tilde m^2\right)\,dx
\end{equation}
for $\sigma$ satisfying~\eqref{eq:constants} and
\begin{equation}\label{bound-on-sigma_4}
\sigma>\sigma_4:=\frac{1}{c_1}\left(\frac{1}{\bar K\bar\rho} + {\sigma_0}\right) \;.
\end{equation}
Here, we note  that $\sigma_4= \sigma_1 + \frac {\sigma_0} {c_1} >\sigma_1$.

Therefore, it is straightforward to bound the $L^2$ norm of $\tilde\rho$ and $\tilde m$ 
\begin{align}\label{S3.3: L2 bound init E}
\| (\tilde\rho(x,t), \tilde m(x,t)\|_{L^2(\bbt)} \leq \frac1{\sqrt{\sigma_0}}\sqrt{\mathcal{E}(t)}\le\frac1{\sqrt{\sigma_0}}
    \sqrt{\mathcal{E}(0)}e^{-ct} ,
\end{align}
for $t\in[0,T^*)$, using~\eqref{eq:gronw-energy} and~\eqref{S3.3 e L2} for $\sigma>\max\{\sigma_2,\,\sigma_3,\,\sigma_4\}$ 
and get the exponential in time decay. Moreover, the quantity $\sqrt{\mathcal{E}(0)}$ is bounded by the $L^2$ norm of the initial data. 
Indeed, recalling \eqref{E-above}, we have
\begin{align*}
    \mathcal{E}(0) &  \leq \mu^2\left( \int_\bbt \tilde\rho^2_0\,dx + \int_\bbt \tilde m^2_0\,dx\right) =\mu^2 \| (\tilde\rho_0,\tilde m_0)\|_{L^2}^2
\end{align*}
with $\mu^2=c_2\sigma + 2 + \frac{1}{\bar K\bar \rho}$. 
Thus, combining with~\eqref{S3.3: L2 bound init E}, the proof of estimate \eqref{prop:three} is complete.
\end{proof}

\begin{proof}[Proof of Theorem \ref{thm 1:L2-decay}]
The proof is immediate via the change of variables~\eqref{def:tilde-var} in Section~\ref{S2} and Theorem~\ref{th:L2-decay}.
\end{proof}

\small{

}
\end{document}